\documentclass[11pt]{scrartcl}
\usepackage{amsmath,amssymb,amsfonts}
\usepackage{latexsym}
\usepackage{epsfig,overpic}
\usepackage{array}
\usepackage{microtype}
\usepackage[colorlinks=true,linkcolor=blue,citecolor=blue,pdfborder={0 0 0}]{hyperref}
\DeclareGraphicsExtensions{.pdf,.eps,.ps,.eps.gz,.ps.gz,.eps.Y}

\usepackage{tikz}
\usetikzlibrary{shapes,arrows,snakes,calendar,matrix,backgrounds,folding,calc,positioning,patterns,spy}

\usepackage{booktabs}

\usepackage{amsthm}
\newtheorem{lemthm}{Base class}[section]
\newtheorem{theorem}[lemthm]{Theorem}
\newtheorem{lemma}[lemthm]{Lemma}

\newtheorem{todo-remark}[lemthm]{ToDo remark}

\newtheorem{remark}{Remark}[section]

\numberwithin{equation}{section}
\numberwithin{figure}{section}
\numberwithin{table}{section}


\setcounter{secnumdepth}{3}
\setcounter{tocdepth}{3}

\definecolor{mildblue}{HTML}{0000b0}
\definecolor{mildpurple}{HTML}{400090}
\definecolor{mildgreen}{HTML}{007000}
\definecolor{mildred}{HTML}{b00000}
\definecolor{lightblue}{HTML}{57c5d9}
\definecolor{lightred}{HTML}{e35555}
\definecolor{grey}{HTML}{BBBBBB}
\definecolor{lightgrey}{HTML}{EEEEEE}
\definecolor{mygrey}{HTML}{777777}
\definecolor{black}{HTML}{000000}
\definecolor{myblockcolor}{HTML}{002e6e}


%

\newcommand{\bA}{\mathbf A}

\newcommand{\bC}{\mathbf C}

\newcommand{\bn}{\mathbf n}

\newcommand{\bt}{\mathbf \tau}

\newcommand{\bx}{\mathbf x}


\newcommand{\mT}{\mathcal{T}}

\newcommand{\T}{\mathcal T}

\renewcommand{\div}{\mathop{\rm div}}

\newcommand{\rr}{\mathbb{R}}

\newcommand{\jumpleft}{[\![}
\newcommand{\jumpright}{]\!]}

\newcommand{\averageleft}{\{\!\!\{}
\newcommand{\averageright}{\!\}\!\!\}}

\newcommand{\X}{{x}}

\newcommand{\spacejump}[1]{\jumpleft #1 \jumpright}

\newcommand{\average}[1]{\averageleft \! #1 \averageright}

\newcommand{\enorm}[1]{|\!|\!| #1 |\!|\!|}



\newcommand{\MAT}[1]{\mathbf{#1}}

\newcommand{\BrokenOmega}{\Omega_{1\!,2}}



\usepackage{todonotes}

\newcommand{\bT} {\mathbf{T}}
\newcommand{\bbC} {\mathbf{C}}

\newcommand{\W}{\mathcal W}
\newcommand{\J}{\mathcal J}
\newcommand{\s}{\mathcal S}

\def\b|{{\|\hskip-0.16ex|}}
\def\enorm#1{|\!|\!| #1 |\!|\!|_h}
\def\Enorm#1{|\!|\!| #1 |\!|\!|}

\title{Optimal preconditioners for Nitsche-XFEM discretizations of interface problems}

\author{Christoph Lehrenfeld\thanks{Institut f\"ur Geometrie und Praktische  Mathematik, RWTH-Aachen
    University, D-52056 Aachen, Germany (lehrenfeld@igpm.rwth-aachen.de).}
  \and Arnold Reusken\thanks{Institut f\"ur Geometrie und Praktische  Mathematik, RWTH-Aachen
    University, D-52056 Aachen, Germany (reusken@igpm.rwth-aachen.de).}
}

\begin{document}
{

  \maketitle

  \begin{abstract} In the past decade,  a combination of \emph{unfitted} finite elements (or XFEM) with the Nitsche method  has become a popular discretization method for elliptic  interface problems. This development started with the introduction and analysis of this Nitsche-XFEM technique in the paper [A. Hansbo, P. Hansbo, Comput. Methods Appl. Mech. Engrg. 191 (2002)].  In general, the resulting linear systems  have very large  condition numbers, which  depend not only on the mesh size $h$, but also on how the interface intersects the mesh.  
This paper is concerned with the design and analysis of optimal preconditioners for such linear systems. We propose an additive subspace preconditioner which is optimal in the sense that the resulting condition number is independent of the mesh size $h$ and the interface position. We further show that already the simple diagonal scaling of the stifness matrix results in a  condition number that is bounded by $ch^{-2}$, with a constant $c$ that does not depend on the location of the interface.  Both results are proven for the two-dimensional case. Results of numerical  experiments  in two and three dimensions are presented, which illustrate the quality of the preconditioner.
  \end{abstract}
\ \\[2ex]
  {\bf AMS subject classifications.}
  65N12, 65N30
  \\[2ex]
{\bf Key words.}
  ellitic interface problem, extended finite element space, XFEM, unfitted finite element method,  Nitsche method,
  preconditioning, space decomposition


  \section{Introduction}
  Let $\Omega \in \Bbb{R^d}$, $d=2,3$, be a polygonal domain that is subdivided in two connected subdomains $\Omega_i$, $i=1,2$. For simplicity we assume that $\Omega_1$ is strictly contained in $\Omega$, i.e., $\partial \Omega_1 \cap \partial \Omega =\emptyset$. The interface between the two subdomains is denoted by $\Gamma= \partial \Omega_1 \cap \partial \Omega_2$. 
  We are interested in interface problems of the following type: 
  \begin{subequations} \label{poisson}
    \begin{align}
      \hspace*{2.0cm}  - \div (\alpha \nabla u) = & \, f 
      & \hspace*{-0.6cm} & \text{in}~~ \Omega_i, 
      & \hspace*{-0.6cm} & i=1,2, \hspace*{2.0cm}
      \label{poissoneq1} \\
      \spacejump{\alpha \nabla u \cdot \bn}_{\Gamma}   = & \, 0 
      & \hspace*{-0.6cm} & \text{on}~~ \Gamma, 
      & \hspace*{-0.6cm} & 
      \label{poissoneq2} \\
      \spacejump{\beta u}_{\Gamma} = & \, 0
      & \hspace*{-0.6cm} & \text{on}~~ \Gamma, 
      & \hspace*{-0.6cm} & 
      \label{poissoneq3} \\
      u = & \, 0  
      & \hspace*{-0.6cm} & \text{on}~~\partial \Omega.
      & \hspace*{-0.6cm} & 
      \label{poissoneq4} 
    \end{align}
  \end{subequations}
  Here $\bn$ is the outward pointing unit normal on $\Gamma= \partial \Omega_1$, $ \spacejump{\cdot}$ the usual jump operator and  $\alpha=\alpha_i>0$, $\beta=\beta_i>0$ in $\Omega_i$ are piecewise constant coefficients. In general one has  $\alpha_1 \neq \alpha_2$. If $\beta_1=\beta_2=1$, this is a standard interface problem that is often considered in the literature \cite{Hansbo02,Chen98,burmanzunino12,zawakrbe13}. For $\beta_1 \neq \beta_2$  this model is very similar to models used for mass transport in two-phase flow problems \cite{Bothe03VOF,Bothe04,Sadhal,Slattery,Ishii}. Without loss of generality  we assume $\beta_i \geq 1$. The interface condition in \eqref{poissoneq3} is then usually called the Henry interface condition. Note that if $\beta_1 \neq \beta_2$, the solution $u$ is discontinuous across the interface. If $\beta_1 = \beta_2$ and $\alpha_1 \neq \alpha_2$ the first (normal) derivative of the solution is discontinuous across $\Gamma$. In the setting of two-phase flo
 ws one is typically interested in moving 
interfaces and instead of \eqref{poisson} one uses a time-dependent mass transport model. In this paper, however, we restrict to the simpler stationary case.

  In the past decade,  a combination of \emph{unfitted} finite elements (or XFEM) with the Nitsche method  has become a popular discretization method for this type of interface problems. This development started with the introduction and analysis of this Nitsche-XFEM technique in the paper \cite{Hansbo02}.  Since then this method has been extended in several directions, e.g., as a fictitious domain approach, for the discretization of interface problems in computational mechanics, for the discretization of Stokes interface problems and for the discretization of mass transport problems with moving interfaces, cf.~ \cite{burmanhansbo12,Hansbo04,Hansbo05a,reuskenlehrenfeld12,reuskenlehrenfeld13,ReuskenHieu,HaZa2014}. Almost all papers on this subject treat applications of the method or present discretization error analyses. Efficient iterative solvers for the discrete problem is a topic that has hardly been addressed so far. In general, solving the resulting discrete problem efficiently 
 is a challenging task due 
to the well-known fact that the conditioning of the stiffness matrix is sensitive to the position of the interface relative to the mesh. If the interface cuts elements in such a way that the ratio of the areas (volumes) on both sides of the interface is very large, the stiffness matrix becomes (very) ill-conditioned.  

Recently, for stabilized versions of the Nitsche-XFEM method condition  number bounds of the form $ch^{-2}$, with a constant $c$ that is independent of how the interface $\Gamma$ intersects the triangulation, have been derived \cite{burmanhansbo12,HaZa2014,zawakrbe13}. In \cite{HaZa2014} an inconsistent stabilization is used to guarantee LBB-stability for the pair of finite element spaces used for the Stokes interface problem. This stabilization also improves the conditioning of the stiffness matrix, leading to a $c h^{-2}$ condition number bound. In \cite{zawakrbe13} a stabilized  variant of the Nitsche-XFEM  for the problem \eqref{poisson} is considered. For this method an $ch^{-2}$ condition number bound is derived.

In this paper we consider the original Nitsche-XFEM method from \cite{Hansbo02} for the discretization of \eqref{poisson}, without any stabilization. In \cite{Hansbo02} for this method optimal discretization error bounds are derived. We prove that after a simple diagonal scaling the condition number is bounded by $ch^{-2}$, with a constant $c$ that is independent of how the interface $\Gamma$ intersects the triangulation. We prove that an \emph{optimal preconditioner}, i.e. the condition number of the preconditioned matrix is independent of $h$ and of how the interface $\Gamma$ intersects the triangulation, can be constructed from approximate subspace corrections. If in the subspace spanned by the continuous piecewise linears one applies a standard multigrid preconditioner and in the subspace spanned by the discontinuous finite element functions that are added close to the interface (the xfem basis functions) one applies a simple Jacobi diagonal scaling, the resulting \emph{additive 
 subspace preconditioner 
is optimal}. The latter is the main result of this paper. The analysis uses the very general theory of subspace correction methods \cite{Xu:92a,Yserentant:93}. Our analysis applies to the two-dimensional case ($d=2$), but we expect that a very similar optimality result holds for $d=3$. This claim is supported by results of numerical experiments that are presented. 

  The results derived in this paper also hold (with minor modifications) if in \eqref{poissoneq2}, \eqref{poissoneq3} one has a nonhomogeneous right-hand side. In such a case one has to modify the right-hand side functional in the variational formulation, but the discrete linear operators that describe   the discretization remain the same.

The outline of this paper is as follows. In section~\ref{sectionXFEM} the Nitsche-XFEM method from \cite{Hansbo02} for the discretization of \eqref{poisson} is described. In section~\ref{sect} we study the direct sum splitting of the XFEM space into three subspaces, namely a subspace of continuous piecewise linears, and two subspaces of xfem functions on both sides of the interface. In Theorem~\ref{thmmain1}, which is the main result of this paper, we prove that this is a uniformly stable splitting. Following standard terminology (as in  \cite{Xu:92a,Yserentant:93}) we introduce an additive subspace preconditioner  in section~\ref{sectprecond}. Based on the   stable splitting property the quality of the preconditioner (i.e., the condition number of the preconditioned matrix) can easily be analyzed. In section~\ref{sectExp} we present results of some numerical experiments, both for $d=2$ and $d =3$. 
  \section{The Nitsche-XFEM discretization}
  \label{sectionXFEM}
  In this section we describe the Nitsche-XFEM discretization, which can be found at several places in the literature \cite{Hansbo02,burmanzunino12}. 
  
  Let $\{\T_h\}_{h>0}$ be a family of shape regular simplicial triangulations of $\Omega$. A triangulation  $\T_h$ consists of simplices $T$, with $h_T:={\rm diam}(T)$ and $h:=\max \{ \, h_T~|~ T \in\T_h \}$.  The triangulation is \emph{unfitted}.
  We introduce some notation for \emph{cut elements}, i.e. elements $T \in \T_h$ with $\Gamma \cap T \neq \emptyset $. The subset of these cut elements is denoted by  $\mT_h^{\Gamma} := \{ T\in \T_h~|~ T\cap \Gamma \neq \emptyset \}$. To simplify the presentation and avoid technical details we assume that for all $T \in \mT_h^{\Gamma}$ the intersection $\Gamma_T := T \cap \Gamma$ does not coincide with a subsimplex of $T$ (a face, edge or vertex of $T$). Hence, we assume that $\Gamma_T$ subdivides $T$ into two subdomains $T_i:=T\cap \Omega_i$ with ${\rm meas}_{d}(T_i) >0$. We further assume that there is at least one vertex of $T$ that is inside domain $\Omega_i,~i=1,2$. In the analysis we assume that $\mT_h^{\Gamma}$ is \emph{quasi-uniform}. 

  
  Let $V_h \subset H_0^1(\Omega)$ 
  be the standard finite element space of continuous piecewise linears corresponding to the
  triangulation $\T_h$ with zero boundary values at $\partial \Omega$. Let $\{ \bx_j~|~j=1, \ldots n\}$, with $n=\dim V_h$, be the set of internal vertices in the triangulation. The index set is denoted by $\mathcal J=\{1,\ldots,n\}$. 
  Let
  $(\phi_j)_{j \in \mathcal J}$ be the nodal basis functions in $V_h$, where $\phi_j$ corresponds to the vertex with index $j$. 
  Let $\mathcal J_\Gamma:=\{\, j\in \mathcal J~|~ ~|\Gamma\cap {\rm supp}(\phi_j)| >0\, \}$ be the index set of those basis functions
  the support of which is intersected by $\Gamma$.  The Heaviside function $H_\Gamma$ has the
  values $H_\Gamma(x)=0$ for $x \in \Omega_1$, $H_\Gamma(x)=1$ for $x \in \Omega_2$. Using this,
  for $j \in \mathcal J_\Gamma$ we
  define an enrichment function $\Phi_j (x):= | H_\Gamma(x)-H_\Gamma(\bx_j)|$.
  We introduce additional, so-called \emph{xfem basis functions}
  $\phi_j^\Gamma:= \phi_j \Phi_j$, $j \in \mathcal J_\Gamma$. Note that   $\phi_j^\Gamma(\bx_k)=0$ for all $j \in \mathcal J_\Gamma, \, k \in \mathcal J$. Furthermore, for $j \in \mathcal J_\Gamma $ and $\bx _j \in \Omega_1$, we have ${\rm supp}(\phi_j^\Gamma) \subset \bar \Omega_2 $ and for $\bx _j \in \Omega_2$, we have ${\rm supp}(\phi_j^\Gamma) \subset \bar \Omega_1$. Related to this, the  index set  $\mathcal J_\Gamma$ is partitioned in $\mathcal J_{\Gamma,2}:=\{ j \in \mathcal J_\Gamma~|~ \bx_j \in \Omega_1\}$ and $\mathcal J_{\Gamma,1}:=\mathcal J_\Gamma \setminus \mathcal J_{\Gamma,2}=\{ j \in \mathcal J_\Gamma~|~ \bx_j \in \Omega_2\}$. Hence, for $j \in \mathcal J_{\Gamma,i}$ the xfem basis function $\phi_j^\Gamma$ has its support in $\bar \Omega_i$, $i=1,2$.  The XFEM space is defined by 
  \begin{equation} \label{tmp:ppp}
    V_h^{\Gamma} := V_h \oplus V_{h,1}^\X \oplus V_{h,2}^\X = V_h \oplus V_{h}^\X \quad \mbox{with } V_{h,i}^\X := {\rm span}\{\, \phi_j^\Gamma~|~j \in \mathcal J_{\Gamma,i}\, \},
  \end{equation}
  and $ V_{h}^\X:=V_{h,1}^\X \oplus V_{h,2}^\X$.
  \begin{remark} \rm The XFEM space $V_h^{\Gamma}$ can also be characterized as follows: $v_h \in V_h^{\Gamma}$ if and only if there exist finite element functions $v_1, v_2 \in V_h$ such that $(v_h)_{|\Omega_i} = (v_i)_{|\Omega_i}$, $i=1,2$. From this characterization one easily derives optimal approximation properties of the XFEM space for functions that are piecewise smooth, cf.~\cite{Hansbo02,reusken07}.   
  \end{remark}
In the literature, e.g., \cite{Hansbo02,burmanzunino12}, discretization with the space $V_h^\Gamma$ is also called an \emph{unfitted finite element method}.\\
  An $L^2$-stability property of the basis $(\phi_j)_{j \in \mathcal J} \cup (\phi_j^\Gamma)_{j \in
    \mathcal J_\Gamma}$ of $V_h^\Gamma$ is given in \cite{reusken07}. 

  For the discretization of the equation \eqref{poisson} in the XFEM space we first introduce some notation for scalar products. 
  The $L^2$ scalar product is denoted by $(u,v)_0:= \int_\Omega u v \, dx$. 
  Furthermore we define
  \[
  (u,v)_{1, \BrokenOmega}:=  (\nabla u, \nabla v)_{L^2(\Omega_1)}+ (\nabla u, \nabla v)_{L^2(\Omega_2)}
  , \quad
  u,v \in H^1(\Omega_{1,2}) := H^1({\Omega_1 \cup \Omega_2}),
  \]
  with the semi-norm denoted by $|\cdot |_{1,\BrokenOmega}=(\cdot,\cdot)_{1, \BrokenOmega}^\frac12 $ and norm $\Vert \cdot \Vert_{1,\BrokenOmega} := (\Vert \cdot \Vert_{0}^2 + \vert \cdot \vert_{1,\BrokenOmega}^2)^{\frac12}$.
  On the interface we introduce the scalar product
  \begin{equation}
    (f,g)_\Gamma:=\int_\Gamma f g \, ds
  \end{equation}
  and the mesh-dependent weighted $L^2$ scalar product 
  \begin{equation}
    (f,g)_{\frac12,h,\Gamma}:=h^{-1} \int_\Gamma f g \, ds.
  \end{equation}
  The Nitsche-XFEM discretization of the interface problem \eqref{poisson} reads as follows: \\
  Find $u_h \in V_h^{\Gamma}$ such that
  \begin{equation} \label{discpoisson}
    \begin{split}
      & (\alpha \beta u_h,v_h)_{1, \BrokenOmega} - ( \average{ \alpha \nabla u_h \cdot \bn}, \spacejump{\beta v_h} )_{\Gamma} -( \average{ \alpha \nabla v_h \cdot \bn}, \spacejump{\beta u_h } )_{\Gamma} \\ &  + ( \lambda  \spacejump{\beta u_h}, \spacejump{\beta v_h} )_{\frac12,h,\Gamma} = (\beta f,v_h)_0   \quad \text{for all}~~ \, v_h \in V_h^\Gamma.
    \end{split}
  \end{equation}
  Here  we used 
the average $\average{ w } := \kappa_1 w_1 + \kappa_2 w_2$ with an element-wise constant $\kappa_i = \frac{|T_i|}{|T|}$. This weighting in the averaging is taken from the original paper \cite{Hansbo02}. The stabilization parameter $\lambda \geq 0$ should be taken sufficiently large, $\lambda > c_{\lambda} \max\{\alpha_i\}_{i=1,2}$, with  a suitable constant $c_{\lambda}$ only depending on the shape regularity of $T\in\mT_h$. 


  Discretization error analysis for this method is available in the literature. In \cite{Hansbo02} optimal order discretization error bounds are derived for the case $\beta_1=\beta_2=1$. The case $\beta_1 \neq \beta_2$ is treated in \cite{ReuskenHieu}. 

  For the development and analysis of preconditioners for the discrete problem, without loss of generality we can restrict to the case $\beta_1=\beta_2=1$.  This is due to the following observation. We note that (also if $\beta_1 \neq \beta_2$) we have $\beta v_h \in V_h^\Gamma$ iff $v_h \in V_h^\Gamma$. Thus, by rescaling the test functions $v_h$ and with $\tilde \alpha:=\alpha \beta^{-1}$ the problem \eqref{discpoisson} can be reformulated as follows:  
  Find $ \tilde u_h= \beta u_h \in V_h^{\Gamma}$ such that
  \begin{equation} \label{discpoissonA}
    \begin{split}
      & ( \tilde \alpha \tilde u_h,v_h)_{1, \BrokenOmega} - ( \average{ \tilde \alpha \nabla \tilde u_h \cdot \bn}, \spacejump{v_h} )_{\Gamma} -( \average{ \tilde \alpha \nabla v_h \cdot \bn}, \spacejump{\tilde u_h } )_{\Gamma} \\ &  + ( \lambda  \spacejump{\tilde u_h}, \spacejump{v_h} )_{\frac12,h,\Gamma} = ( f,v_h)_0   \quad \text{for all}~~ \, v_h \in V_h^\Gamma.
    \end{split}
  \end{equation} 

  The stiffness matrices corresponding to \eqref{discpoisson} and \eqref{discpoissonA} are related by a simple basis transformation. In the remainder of the paper we only consider the preconditioning of the stiffness matrix corresponding to \eqref{discpoissonA}. Via the simple basis transformation the solution to \eqref{discpoissonA} directly gives a solution to \eqref{discpoisson}.
  \begin{remark} \rm In certain situations it may be (e.g., due to implementational aspects) less convenient to transform the discrete problem \eqref{discpoisson} into \eqref{discpoissonA}. If one wants to keep the original formulation, it is easy to provide an (optimal) preconditioner for it, given a preconditioner for the transformed problem \eqref{discpoissonA}. We briefly explain this. Let $(\psi_j)_{1 \leq j \leq m}$ denote the basis for $V_h^\Gamma$, and $\bA$, $\tilde \bA$ the stiffness matrices w.r.t. this basis of the problems \eqref{discpoisson} and \eqref{discpoissonA}, respectively. Let $\bT$ be the matrix representation of the mapping $v_h \to \beta^{-1} v_h$, for $v_h \in V_h^\Gamma$, i.e., the $i$-th row of $\bT$ contains the coefficients $t_{i,k}$ such that $\beta^{-1} \psi_i= \sum_{k=1}^m t_{i,k} \psi_k$. Then the relation $\tilde \bA= \bT \bA \bT^T$ holds. Given a preconditioner $\tilde \bbC$ for $\tilde \bA$, we define $\bbC:= \bT^T \tilde \bbC \bT$ as precondition
 er for $\bA$. Due to the equality of spectra,  $\sigma(\bbC \bA) = \sigma(\tilde \bbC \tilde \bA)$, the quality of $\bC$ as a preconditioner for $\bA$ is the same as the quality of $\tilde \bbC$ as a preconditioner for $\tilde \bA$. 
   \end{remark}
  We introduce a compact notation for the symmetric bilinear form used in \eqref{discpoissonA}. For convenience we write $\alpha$ instead of $\tilde \alpha$, and we assume a global constant value for $\lambda$:
  \begin{equation}  \label{defah}
    a_h(u,v) :=( \alpha u,v)_{1, \BrokenOmega} - ( \average{ \alpha \nabla u \cdot \bn}, \spacejump{v} )_{\Gamma} -( \average{\alpha \nabla v \cdot \bn}, \spacejump{u } )_{\Gamma}   +  \lambda ( \spacejump{ u}, \spacejump{v} )_{\frac12,h,\Gamma}.
  \end{equation}
  This bilinear form is well-defined on $V_h^\Gamma \times V_h^\Gamma$. 
  For the analysis we introduce the bilinear form and corresponding norm defined by
  \begin{equation} \label{norm1}
    \enorm{u}^2= |u|_{1, \BrokenOmega}^2 + \lambda \|\spacejump{u}\|_{\frac12,h,\Gamma}^2, \quad u \in V_h^\Gamma.
  \end{equation}

  In \cite{Hansbo02} it is shown that, for $\lambda$ sufficiently large,  the norm corresponding to the Nitsche bilinear form is uniformly equivalent to $\enorm{\cdot}$:
  \begin{equation} \label{normeq}
    a_h(u,u) \sim \enorm{u}^2 \quad \text{for all}~~u \in V_h^\Gamma.
  \end{equation}
  Here and in the remainder we use the symbol $\sim$ to denote two-sided inequalities with constants that are \emph{independent of h and of how the triangulation is intersected by the interface $\Gamma$}. The constants in these inequalities may depend on $\alpha$ and $\lambda$. We also use $\lesssim$ to denote one-sided estimates that have the same uniformity property. In the remainder we assume that $\lambda >0$ is chosen such that \eqref{normeq} holds.
  \section{Stable subspace splitting} \label{sect}
  We will derive an optimal preconditioner for the bilinear form in \eqref{defah} using the theory of \emph{subspace correction methods}. Two excellent overview papers on this topic are \cite{Xu:92a,Yserentant:93}. The theory of subspace correction methods as described in these overview papers is a very general one, with applications to multigrid and to domain decomposition methods. We apply it for a relatively very simple case with three disjoint spaces.  We use the notation and some main results from \cite{Yserentant:93}. It is convenient to adapt our notation to the one of the abstract setting in  \cite{Yserentant:93}. The three subspaces in \eqref{tmp:ppp} are denoted by $\W_0=V_h$, $\W_i=V_{h,i}^\X$, $i=1,2$. Thus we have the direct sum decomposition
  \begin{equation} \label{splitting}
  \s:=V_h^\Gamma= \W_0 \oplus \W_1 \oplus \W_2.
  \end{equation}
  Below $u=u_0+u_1+u_2 \in \s$ always denotes a decompositon with $u_l \in \W_l$, $l=0,1,2$. For the norm induced by the bilinear form $a_h( \cdot,\cdot)$
  we use the notation
  \[
  \|u\|_h:=a_h(u,u)^\frac12, \quad u \in \s.
  \]
  Recall that this norm is uniformly equivalent to $\enorm{\cdot}$, cf.~\eqref{normeq}.
  In theorem~\ref{thmmain1} below we show that the splitting in \eqref{splitting} is stable w.r.t. the norm $\|\cdot\|_h$. 

  The result in the next theorem is the key point in our analysis. We show that the splitting of $\s$ into $\W_0$ and the subspace spanned by the xfem basis functions $\W_1 \oplus \W_2$ is stable. For this we restrict to the two-dimensional case $d=2$. We use a transformation of certain patches to a reference patch on $[0,1]^2$. We first describe this transformation.\\
  We construct a subdivision of $\mT_h^\Gamma$ into patches $\{\omega_k\}$ as follows, cf.~Figure~\ref{sketchpatch}. We first define a subset $\mathcal E$ of all edges that are intersected by $\Gamma$. Consider an edge $E_1$ which is intersected by $\Gamma$ such that one vertex $V_1$ is in $\Omega_1$ and the other, $V_1^\ast$, is in $\Omega_2$. We define this edge as the first element in $\mathcal E$. Now fix one direction along the interface and going in this direction along $\Gamma$ we get an ordered list of all edges intersected by $\Gamma$. As last edge in this list we include the starting edge $E_1$. As the next edge $E_2 \in \mathcal E$ we take the first one after $E_1$ (in the list) that has no common vertex with  $E_1$. As $E_3 \in \mathcal E$ we take the first one after $E_2$ that has no common vertex with  $E_2$, etc.. To avoid technical details we assume that the final edge $E_{N_{\mathcal E}}$ included in $\mathcal E$ coincides with $E_1$.
  By construction we get a numbering of certain vertices as in the left part of Figure \ref{sketchpatch}: edge $E_j$ has vertices $V_j \in \Omega_1$, $V_j^\ast \in \Omega_2$.
  \begin{figure}[ht]
  \begin{center}
      \input{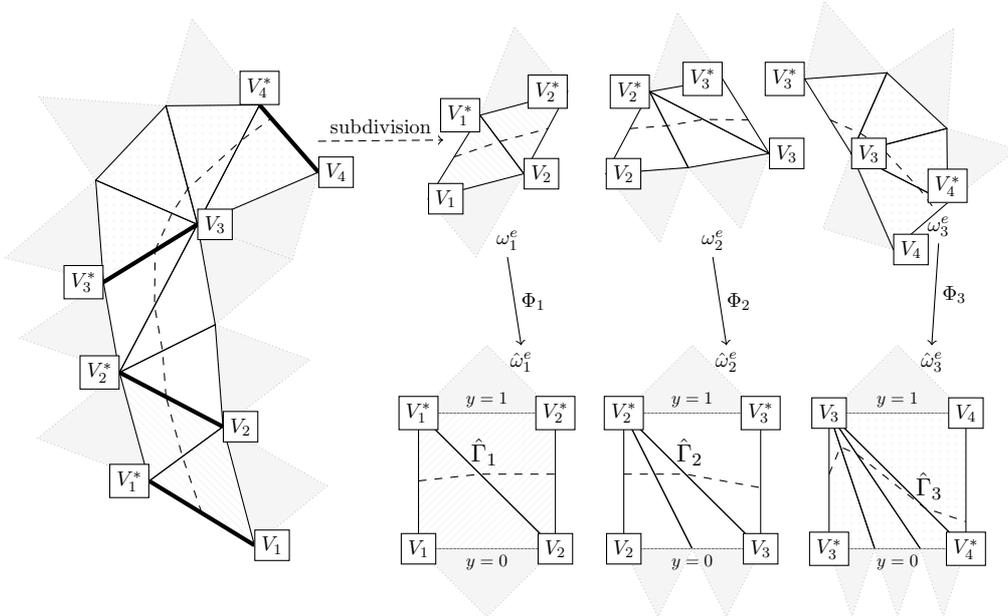}
  \end{center}
  \caption[Sketch of a patch]{Sketch of the partitioning of $\mT_h^\Gamma$ (and neighboring elements) into (extended) patches $\omega_k^e$ and their transformations to a reference configurations.}
  \label{sketchpatch}
\end{figure}
%

   The elements between two edges $E_k, E_{k+1} \in \mathcal{E}$ form the patch $\omega_k$. The patches $\{\omega_k\}_{1 \leq k \leq N_\omega}$, with $N_\omega = N_{\mathcal E} -1$, form a disjoint partitioning of $\T_h^\Gamma$. We define the extended patch $\omega_k^e$ by adding the neighboring elements which are not in $\T_h^\Gamma$, i.e., $\omega_k^e := \omega_k \cup \{ T \in \mT_h \setminus \mT_h^\Gamma~|~\text{$T$ has a common edge with a}~ T' \in \omega_k\}$. The part of the interface $\Gamma$ contained in $\omega_k^e$ is denoted by $\Gamma_k$.  
 The triangulation (and corresponding domain) formed by the union of the extended patches $\omega_k^e$ is denoted by $\T_h^{\Gamma,e}$. Note that every element $T \in \mT_h^{\Gamma,e}$ can appear in at most two patches $\omega_k^e$.
  Further note that the number of elements within each extended patch $\omega_k^e$ is uniformly bounded due to shape regularity of $\mT_h$.
  For each extended patch $\omega_k^e$ there exists a piecewise affine transformation $\Phi_k: \omega_k^e \rightarrow \rr^2$ such that $\Phi_k(\omega_k) = [0,1]^2$. Accordingly we denote a transformed patch by $\hat{\omega}$ and $\hat{\omega}^e$.

  \begin{theorem} \label{thmMain}
    Take $d=2$. The following holds:
    \begin{equation} \label{stable}
      \|u_0\|_h^2 +\|w\|_h^2  \lesssim \|u_0+w\|_h^2 \quad \text{for all}~ u_0 \in \W_0, ~w \in \W_1 \oplus \W_2. 
    \end{equation}
  \end{theorem}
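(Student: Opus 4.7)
First I would apply the norm equivalence \eqref{normeq} to replace $\|\cdot\|_h$ by $\enorm{\cdot}$. Since $u_0\in V_h\subset H_0^1(\Omega)$ is continuous across $\Gamma$, we have $\spacejump{u_0}=0$, hence $\spacejump{u_0+w}=\spacejump{w}$ and $\enorm{u_0}^2=|u_0|_{1,\BrokenOmega}^2$. The weighted jump term $\lambda\|\spacejump{w}\|_{\frac12,h,\Gamma}^2$ then appears with identical weight on both sides of \eqref{stable}, so it can be dropped from the left at the cost of a fixed factor. The claim reduces to
\[
|u_0|_{1,\BrokenOmega}^2 + |w|_{1,\BrokenOmega}^2 \lesssim |u_0+w|_{1,\BrokenOmega}^2 + \lambda\|\spacejump{w}\|_{\frac12,h,\Gamma}^2,
\]
i.e.\ a uniform lower bound on the angle between $\W_0$ and $\W_1\oplus\W_2$ in the broken $H^1$ seminorm, with the $h^{-1}$-weighted jump term on $\Gamma$ providing additional slack.

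\textbf{Patchwise localization and reference transformation.} Because $w$ vanishes identically outside $\mT_h^\Gamma$, the contribution from any $T\notin\mT_h^\Gamma$ is trivial. A uniform estimate on a single cut element is not possible, since $|T_i|/|T|$ can be arbitrarily small; this is exactly what forces the use of the extended patches $\omega_k^e$. Each element of $\mT_h^{\Gamma,e}$ lies in at most two of them, so finite overlap reduces the global inequality to a sum of patchwise inequalities on $\omega_k^e$. On each such patch I would apply the piecewise affine transformation $\Phi_k$, which maps $\omega_k$ onto $[0,1]^2$. The $H^1$ seminorm is affine-invariant under the appropriate rescaling, and $h^{-1}\int_{\Gamma_k}$ rescales to a dimensionless $L^2$ pairing on $\hat\Gamma_k$, so the patchwise claim is equivalent to a reference inequality controlling $|\hat u_0|_1^2+|\hat w|_1^2$ (broken across $\hat\Gamma_k$) by $|\hat u_0+\hat w|_1^2$ (broken) plus $\|\spacejump{\hat w}\|_{L^2(\hat\Gamma_k)}^2$, with a constant independent of the position of the reference interface $\hat\Gamma_k\subset[0,1]^2$. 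Shape regularity bounds the number of triangles per patch, so only finitely many reference topologies occur.

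\textbf{Main obstacle: uniform stability on the reference patch.} For any fixed admissible $\hat\Gamma_k$ the reference estimate is a finite-dimensional inequality that holds by the direct sum property $\hat V_h^\Gamma=\hat V_h\oplus\hat V_h^\X$. The hard part is uniformity as $\hat\Gamma_k$ degenerates, for example by approaching a vertex of $[0,1]^2$ or collapsing onto one of its edges; in such limits the two subspaces can become nearly parallel in the plain $H^1$ seminorm, so naive compactness breaks down. The saving structural feature is that every xfem basis function vanishes at its associated anchor vertex, so on a collapsing subdomain the $H^1$ energy of $\hat w$ scales with the vanishing volume, while its residual contribution corresponds precisely to a large jump across $\hat\Gamma_k$, which is controlled by the $L^2(\hat\Gamma_k)$ jump term on the right-hand side. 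Combining a compactness argument in the regime where $\hat\Gamma_k$ is bounded away from degeneracy with a direct analysis of the finitely many degenerate limits should yield the required uniform constant. Summing the patchwise estimates and restoring the trivial off-patch contributions then gives \eqref{stable}.
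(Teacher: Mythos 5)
Your scaffolding --- norm equivalence, localization to the extended patches $\omega_k^e$ with finite overlap, and transformation to the reference configuration on $[0,1]^2$ --- is exactly the paper's, and your remark that the extended patches are what rescue the estimate from arbitrarily small cuts is the right observation. The gap is that the entire analytic content of the theorem sits in the step you defer to ``a compactness argument \ldots\ with a direct analysis of the finitely many degenerate limits,'' and that argument cannot be completed as described. The admissible positions of $\hat\Gamma$ inside the reference patch form a continuum, and so do the degenerate limits: the interface can collapse onto the segment $y=0$ or $y=1$, or pass arbitrarily close to any of the (uniformly bounded in number, but freely placed) vertices on those segments, in a continuum of ways, so there are not ``finitely many'' degenerate cases. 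Worse, the subspaces $\hat W_1\oplus\hat W_2$ themselves degenerate together with the cut, so there is no limiting configuration of the same type at which a ``limit constant'' could be read off; lower semicontinuity of the angle between $\hat W_0$ and $\hat W_1\oplus\hat W_2$ as the cut varies is precisely what has to be proved. Your heuristic --- that the $H^1$ energy of $\hat w$ scales with the vanishing volume while the remainder is seen by the jump term --- is correct in spirit, but it is exactly the statement that must be made quantitative, not a substitute for doing so.

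The paper makes it quantitative via an explicit strengthened Cauchy--Schwarz inequality $(\nabla u_0,\nabla w)_{L^2(\hat\omega)}\leq C^\ast\,\Enorm{u_0}_{\hat\omega^e}\Enorm{w}_{\hat\omega^e}$ with uniform $C^\ast<1$, in three steps: (i) for the $x$-derivatives, $(u_0)_x$ is piecewise constant and takes the same value on the neighboring uncut triangle of the extended patch, giving $\|(u_0)_x\|_{L^2(\hat\omega)}\leq c_0\|(u_0)_x\|_{L^2(\hat\omega^e)}$ with $c_0<1$ --- this is the concrete mechanism by which the extension buys strictness, which your sketch does not identify; (ii) for the $y$-derivatives, integration by parts over the triangles of $\hat\omega$, with tangential continuity of $\nabla u_0$ converting the edge jumps $\spacejump{(u_0)_y}_{e_i}$ into $(u_0)_x$-terms, and an explicit splitting of $\int_{e_i}w\,ds$ into a piece bounded by $\|w_y\|_{L^2(T_i)}$ and a piece bounded by $\|\spacejump{w}\|_{L^2(\hat\Gamma_{T_i})}$, using the elementary geometric bounds $d_1\leq c|T_i|^{1/2}$ and $d_2\leq c|\hat\Gamma_{T_i}|^{1/2}$; (iii) an optimization over the ratio $\theta=\|(u_0)_x\|^2/(\|(u_0)_x\|^2+\|(u_0)_y\|^2)$ yielding $C^\ast<1$ for $\lambda$ sufficiently large, followed by a norm-equivalence argument to cover all admissible $\lambda$ (a dependence on $\lambda$ your reduction does not address). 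Until you supply estimates of this kind, or a genuinely different uniform argument, the proof is incomplete at its central point.
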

  \begin{proof}
    Due to norm equivalence the result in \eqref{stable} is equivalent to:
\[
    \Enorm{u_0}_h^2 +\Enorm{w}_h^2  \lesssim \Enorm{u_0+w}_h^2 \quad \text{for all}~ u_0 \in \W_0, ~w \in \W_1 \oplus \W_2.
\]
For $w \in \W_1 \oplus \W_2$ we have $w=0$ on $\Omega \setminus \T_h^{\Gamma,e}$,  and $\T_h^{\Gamma,e}$ is partitioned 
into patches $\omega^e_k$. Hence, it suffices to prove
    \begin{equation} \label{res1}
      \Enorm{u_0}_{h,\omega^e_k}^2 +\Enorm{w}_{h,\omega^e_k}^2 \lesssim \Enorm{u_0+ w}_{h,\omega^e_k}^2 \quad \text{for all}~u_0 \in \W_0, ~w \in \W_1 \oplus \W_2.
    \end{equation}
    We use the transformation to the reference patch $\hat \omega^e$  described above. On the reference patch we have transformed spaces $\hat \W_0$ (continuous, piecewise linears) and  $ \hat \W_1 \oplus \hat \W_2$. The functions  in $\hat\W_1$ ($\hat \W_2$) are piecewise linear on the part of the  patch below (above) the interface $\hat \Gamma$, zero on the line segment $y=0$ ($y=1$) and zero on  the part of the  patch above (below) the interface $\hat \Gamma$. 
    The norm $\Enorm{u}_{h,\omega^e_k}$  and the induced norm $ \Enorm{\hat u}_{\hat \omega^e_k}= \big( (\nabla \hat u,\nabla \hat u)_{L^2(\hat \omega_k^e)} +\lambda (\spacejump{ \hat u},\spacejump{\hat u})_{L^2(\hat \Gamma_k)}\big)^\frac12$, with $\hat u = u \circ \Phi_k^{-1}$ on $\hat \omega_k^e$, are uniformly equivalent, because the constants in this norm equivalence are determined only by the condition number of the 
    piecewise affine transformation between $\omega^e_k$ and $\hat \omega_k^e$. Note that neither the spaces $\hat \W_l$ nor the norm  $ \Enorm{\cdot }_{\hat \omega^e_k}$ depend on $h$ (the $h$-dependence is implicit in the piecewise affine transformation).  The reference patches $\hat \omega_k^e$ all have the same geometric structure, cf.~Figure~\ref{sketchpatch}.  These patches have (due to shape regularity of $\T_h$) a uniformly bounded number of vertices on the line segment that connects the vertices $V_i$, $V_{i+1}$ (or $V_i^\ast$, $V_{i+1}^\ast$). In the rest of the proof a generic reference patch and its extension are denoted by $\hat \omega$ and $\hat \omega^e$, respectively. The interface segment that is intersected by $\hat \omega$ is denoted by $\hat \Gamma$. We conclude that for \eqref{res1} to hold it is sufficient to prove
    \begin{equation} \label{res2}
      \Enorm{u_0}_{\hat \omega^e}^2 +\Enorm{w}_{\hat \omega^e}^2 \leq K \Enorm{u_0+ w}_{\hat \omega^e}^2 \quad \text{for all}~u_0 \in \hat W_0, ~w \in \hat W_1 \oplus \hat W_2,
    \end{equation}
    with a constant $K$ that is independent of how the patch $\hat \omega$ is intersected by the interface $\hat \Gamma$.
Note that $(\nabla  u_0,\nabla  w)_{L^2({\hat \omega}^e \setminus \hat \omega)} = (\spacejump{  u_0},\spacejump{ w})_{L^2(\hat \Gamma)} = 0$ for $u_0 \in \hat W_0$ and $w \in \hat W_1 \oplus \hat W_2$. Hence,
\[
  \Enorm{u_0+w}_{\hat \omega^e}^2 = \Enorm{u_0}_{\hat \omega^e}^2 + \Enorm{w}_{\hat \omega^e} ^2 + 2 (\nabla u_0,\nabla w)_{L^2(\hat \omega)}, \quad u_0 \in \hat W_0, ~w \in \hat W_1 \oplus \hat W_2
\]
holds.
 Thus
    it suffices to prove the strengthened Cauchy-Schwarz inequality
    \begin{equation}\label{SCS}
      (\nabla u_0,\nabla w)_{L^2(\hat \omega)} \leq C^\ast \Enorm{u_0}_{\hat \omega^e} \Enorm{w}_{\hat \omega^e} \quad \text{for all}~u_0 \in \hat W_0, ~w \in \hat W_1 \oplus \hat W_2,
    \end{equation}
    with a uniform constant $C^\ast<1$.
    The proof of \eqref{SCS} is divided into three steps, namely a strengthened Cauchy-Schwarz inequality related to the $x$-derivative, a suitable Cauchy-Schwarz inequality related to the $y$-derivative and then combining these estimates. \\
    \underline{Step 1.} The following holds:
    \begin{equation} \label{step1} 
      | (u_x, w_x)_{L^2(\hat \omega)}| \leq c_0 \|u_x\|_{L^2(\hat \omega^e)} \|w_x\|_{L^2(\hat \omega)} \quad \text{for all}~u \in  W_0, ~w \in \hat W_1 \oplus \hat W_2,
    \end{equation}
    with a uniform constant $c_0 <1$. From the  Cauchy-Schwarz inequality  we get
    $ | (u_x, w_x)_{L^2(\hat \omega)}| \leq \|u_x\|_{L^2(\hat \omega)} \|w_x\|_{L^2(\hat \omega)} $. Within the patch $\hat\omega = \{ T_i \}$ the $x$-derivative $u_x$ is piecewise constant and $u_x|_{T_i} = u_x|_{T_{i,N}}$ for the neighboring triangle $T_{i,N} \in \hat{\omega}^e \setminus \hat{\omega}$. This implies $\|u_x\|_{L^2(T_i)} \leq \hat c \|u_x\|_{L^2(T_i \cup T_{i,N})}$, with $\hat c <1$ depending only on shape regularity.  Thus we obtain $ \|u_x\|_{L^2(\hat \omega)} \leq c_0 \|u_x\|_{L^2(\hat \omega^e)}$, with a uniform constant $c_0 <1$, which yields \eqref{step1}.\\
    \underline{Step 2.} The following holds: 
    \begin{equation} \label{step2} 
      \begin{split}
        | (u_y, w_y)_{L^2(\hat \omega)}|  & \leq \min \{ c_1 \|u_x\|_{L^2(\hat \omega)} , \|u_y\|_{L^2(\hat \omega)} \} \|w_y\|_{L^2(\hat \omega)} \\
        & + c_2 \|u_y\|_{L^2(\hat \omega)} \|\spacejump{w}\|_{L^2(\hat \Gamma)}\quad  \text{for all}~u \in  W_0, ~w \in \hat W_1 \oplus \hat W_2,
      \end{split}
    \end{equation}
    with suitable uniform constants $c_1,c_2$.

    Let $\{T_i\}$ be the set of triangles that form $\hat{\omega}$ and let these be ordered such that ${\rm meas}_1 (T_i \cap T_{i+1}) > 0$. We denote the interior edges by $e_i = T_i \cap T_{i+1}$. To show \eqref{step2} we start with partial integration
    \begin{equation}\label{partInt}
      \begin{split}
        \Big| \int_{\hat{\omega}} u_y w_y \, dx \Big| & = \Big| \sum_{T_i} \int_{\partial T_i} n_{T_i,y}\,u_y w \, ds + \int_{\hat \Gamma_{T_i}}  n_{\Gamma,y}\, u_y \spacejump{w} \, ds \Big| \\
        & \leq \sum_{e_i} \Big| \spacejump{u_y}_{e_i} \Big| \Big| \int_{e_i} w \, ds \Big| +\Vert u_y \Vert_{L^2(\hat{\Gamma})} \Vert \spacejump{w} \Vert_{L^2(\hat{\Gamma})} 
      \end{split}
    \end{equation}
    where for the edges of $\partial T_i$ that lie on $\partial \hat{\omega}= \partial [0,1]^2$ we used $w=0$ for $y\in\{0,1\}$ and $n_{T_i,y}=0$ for $x \in \{0,1\}$. To proceed we need  technical estimates to bound $\spacejump{u_y}_{e_i}$ and $\int_{e_i} w \, ds$. For those estimates we exploit propertries of the geometry of $\hat{\omega}$.
    First consider $u \in \hat{W}_0$ along an interior edge $e_i \not\in \partial \hat{\omega}$ and denote the unit tangential vector to $e_i$ by $\bt=(\tau_x,\tau_y)$. For $\tau$ we have $|\tau_y| \geq 1/\sqrt{2} \geq |\tau_x|$. Due to continuity of $u$ along $e_i$ there holds $ \spacejump{\nabla u}_{e_i} \cdot \bt = 0$, which implies 
    \[
     \left|\spacejump{u_y}_{e_i} \right| = \left|\frac{\tau_x}{\tau_y}\right| \left| \spacejump{u_x}_{e_i}\right| \leq \big|{{u_x}_|}_{T_i}\big| + \big|{{u_x}_|}_{T_{i+1}}\big|. 
    \]
  Thus we obtain  
\begin{equation}\label{gradjump}
  \left|\spacejump{u_y}_{e_i} \right| \leq c \, \min \{ \|u_x\|_{L^2(T_i \cup T_{i+1})},\|u_y\|_{L^2(T_i \cup T_{i+1})} \,\}.
\end{equation}

    Next, we consider $w = w_1 +w_2\in \hat{W}_1 \oplus \hat{W}_2$ along the interior edge $e_i$. Let $T_i$ be a triangle adjacent to $e_i$. Without loss of generality we assume that two vertices of $T_i$ are in $\hat \Omega_1$ and we thus have $(w_1)_x = 0$ on $T_i$. 
    We denote the vertices of $e_i$ by $\bx_i=e_i \cap \partial \hat{\omega} \cap \hat{\Omega}_i,~i=1,2$ and the intersection point by $\bx_\Gamma=e_i \cap \hat{\Gamma}$ and define the distances $d_i = \Vert \bx_i - \bx_\Gamma \Vert_2,~i=1,2$. As $w$ is piecewise linear along $e_i$, zero at $\bx_1$, and $(w_1)_x = 0$ on $T_i$, we have $w_1(\bx_\Gamma) =\pm d_1 \tau_y (w_1)_y$. Furthermore:
    $$
    \int_{e_i} w \, ds = \frac12 d_1 w_1(\bx_\Gamma) +  \frac12 d_2 w_2(\bx_\Gamma) = \frac12 (d_1+d_2) w_1(\bx_\Gamma) - \frac12 d_2 \spacejump{w}(\bx_\Gamma).
    $$
    We also have the geometrical information $d_1 \leq d_1+d_2 \leq \sqrt{2}$, $d_1 \leq c |T_i|^\frac12$, $|\hat \Gamma_{T_{i}}| \leq \sqrt{2}$ and $d_2 \leq c |\hat\Gamma_{T_{i}}|^\frac12$. Because $\spacejump{w}$ is linear along $\hat \Gamma_{T_i}$ there also holds $|\hat \Gamma_{T_i}|^\frac12 \left| \spacejump{w}(\bx_\Gamma) \right| \leq c \Vert \spacejump{w} \Vert_{L^2(\hat \Gamma_{T_i})} $. Using these results we get
    \begin{equation}\label{edgeint}
      \Big| \int_{e_i} w \, ds \Big| \leq c \Vert w_y \Vert_{L^2(T_i)} + c \Vert \spacejump{w} \Vert_{L^2(\hat \Gamma_{T_i})}.
    \end{equation}
    From \eqref{gradjump} and \eqref{edgeint}  we obtain
    \begin{equation} \label{boundjumps}
      \sum_{e_i} \Big| \spacejump{u_y}_{e_i} \Big| \Big| \int_{e_i} w \, ds \Big| 
      \leq c \Vert u_y \Vert_{L^2(\hat{\omega})} \Vert \spacejump{w} \Vert_{L^2(\hat \Gamma)} 
      + c \Vert u_x \Vert_{L^2(\hat{\omega})} \Vert w_y \Vert_{L^2(\hat \omega)} .
    \end{equation}
    Combining \eqref{partInt}, \eqref{boundjumps} and the Cauchy-Schwarz inequality $\Big| \int_{\hat{\omega}} u_y w_y \, dx \Big| \leq \Vert u_y \Vert_{L^2(\hat{\omega})} \Vert w_y \Vert_{L^2(\hat{\omega})}$ results in \eqref{step2}. \\
    \underline{Step 3.} The following holds:
    \begin{equation} \label{step3} 
      | (\nabla u,\nabla w)_{L^2(\hat \omega)}| \leq C^\ast \big(\|u_x\|_{L^2(\hat \omega^e)} + \|u_y\|_{L^2(\hat \omega)}\big)^\frac12 \big( \|\nabla w \|_{L^2(\hat \omega)}^2 + \lambda \|\spacejump{w}\|_{L^2(\hat \Gamma)}^2 \big)^\frac12 
    \end{equation}
    for all $u \in  W_0, ~w \in \hat W_1 \oplus \hat W_2$,  with a uniform constant $C^\ast <1$.

    The proof combines the preceding results. We define 
    $\alpha_x = \Vert u_x \Vert_{L^2(\hat{\omega}^e)}$,
    $\beta_x = \Vert w_x \Vert_{L^2(\hat{\omega})}$,
    $\alpha_y = \Vert u_y \Vert_{L^2(\hat{\omega})}$,
    $\beta_y = \Vert w_y \Vert_{L^2(\hat{\omega})}$,
    $\gamma = \Vert \spacejump{w} \Vert_{L^2(\hat\Gamma)}$.
    Then we have with \eqref{step1},  \eqref{step2} and $\theta = \frac{\alpha_x^2}{\alpha_x^2+\alpha_y^2}$, $\alpha = (\alpha_x^2+\alpha_y^2)^{\frac12}$ and $\beta=(\beta_x^2+ \beta_y^2 + \lambda \gamma^2 )^{\frac12}$
    \begin{align*}
      |(\nabla u,\nabla w)_{L^2(\hat \omega)} | &\leq c_0 \alpha_x \beta_x + \min\{c_1 \alpha_x,\alpha_y\} \beta_y + c_2 \alpha_y \gamma \\
      & \leq ( c_0^2 \alpha_x^2 + \min\{c_1^2 \alpha_x^2, \alpha_y^2\} + c_2^2 \alpha_y^2\lambda^{-1} )^{\frac12} (\beta_x^2+ \beta_y^2 + \lambda \gamma^2 )^{\frac12} \\
      & \leq ( c_0^2 \theta + \min \{ c_1^2 \theta, 1- \theta \} + c_2^2 (1-\theta) \lambda^{-1})^{\frac12} \alpha \beta.
    \end{align*}
    One easily sees that $c_0^2 \theta + \min \{ c_1^2 \theta, 1- \theta \} \leq \frac{c_0^2+c_1^2}{1+c_1^2} < 1$. For sufficiently large $\lambda$ ($\lambda > \frac{1+c_1^2}{c_2^2 (1-c_0^2)}$) \eqref{step3} follows for a suitable uniform constant $C^\ast <1$.

    The result \eqref{step3} directly implies \eqref{SCS} and thus the estimate \eqref{stable} holds for $\lambda $ sufficiently large. For different values $\lambda \geq \lambda^\ast$, with $\lambda^\ast$ the critical value for which the norm equivalence \eqref{normeq} holds, the norms $\|\cdot\|_h$ (depending on $\lambda$) are equivalent, with equivalence constants depending only on $\lambda$.  This implies that \eqref{stable} holds for any $\lambda \geq  \lambda^\ast$.
  \end{proof}

In the next lemma we derive the stable splitting property of $\W_1 \oplus \W_2$.
  \begin{lemma} \label{lemma1}
    The following holds:
    \begin{align}
      \|u_l\|_h &\sim |u_l|_{1,\Omega_l} \quad \text{for all} ~u_l \in \W_l \quad \text{and}~l=1,2, \label{lem1a} \\
      \|u_1\|_h^2 +\|u_2\|_h^2 & \lesssim \|u_1+u_2\|_h^2 \quad \text{for all}~ u_1+u_2 \in \W_1 \oplus \W_2. \label{lemm1b}
    \end{align}
  \end{lemma}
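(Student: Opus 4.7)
My plan is to establish \eqref{lem1a} first and then deduce \eqref{lemm1b} by a short algebraic argument exploiting the disjoint supports. The easy direction $|u_l|_{1,\Omega_l}\lesssim\|u_l\|_h$ of \eqref{lem1a} is immediate: since $u_l\in\W_l$ is supported in $\bar\Omega_l$, the function $u_l$ vanishes identically on $\Omega_{3-l}$, so $|u_l|_{1,\Omega_l}^2=|u_l|_{1,\BrokenOmega}^2\leq\enorm{u_l}^2\sim\|u_l\|_h^2$ by the norm equivalence \eqref{normeq}. The converse $\|u_l\|_h\lesssim|u_l|_{1,\Omega_l}$ is the actual content; using \eqref{normeq} and the same support property, it reduces to the single trace-type estimate
\[
h^{-1}\|\spacejump{u_l}\|_{L^2(\Gamma)}^2\lesssim|u_l|_{1,\Omega_l}^2,
\]
since on $\Gamma$ the jump $\spacejump{u_l}$ coincides up to sign with the one-sided trace of $u_l$ from the $\Omega_l$ side.

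To prove this I would re-use the geometric setup of Theorem~3.1: decompose $\Gamma=\bigcup_k\Gamma_k$, transport each $\omega_k^e$ to its reference patch via the affine map $\Phi_k$, and observe that under $\Phi_k$ both $h^{-1}\|\cdot\|_{L^2(\Gamma_k)}^2$ and $\|\nabla\cdot\|_{L^2(\Omega_l\cap\omega_k)}^2$ are uniformly equivalent to their hatted analogues on $\hat\omega_k^e$. The global estimate then reduces to the reference-level inequality
\[
\|\hat u_l\|_{L^2(\hat\Gamma)}^2\leq C\|\nabla\hat u_l\|_{L^2(\hat\Omega_l\cap\hat\omega)}^2,\qquad\hat u_l\in\hat\W_l,
\]
with $C$ independent of the position of $\hat\Gamma$. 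The essential geometric input is that, by construction, every $\hat u_l\in\hat\W_l$ vanishes identically on the reference edge opposite to $\hat\Gamma$ inside $\hat\omega$ (the segment $y=0$ for $l=1$, $y=1$ for $l=2$, cf.\ the proof of Theorem~3.1). Integrating $\partial_y\hat u_l$ along vertical lines from this zero-trace edge up to $\hat\Gamma$ and applying Cauchy-Schwarz yields the bound; summing over the finitely-overlapping patches then gives the global estimate.

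The second claim \eqref{lemm1b} follows by a short algebraic argument. Because $u_1$ and $u_2$ have disjoint supports in the interior, $|u_1+u_2|_{1,\BrokenOmega}^2=|u_1|_{1,\Omega_1}^2+|u_2|_{1,\Omega_2}^2$, and hence
\[
\|u_1\|_h^2+\|u_2\|_h^2\sim|u_1|_{1,\Omega_1}^2+|u_2|_{1,\Omega_2}^2=|u_1+u_2|_{1,\BrokenOmega}^2\leq\enorm{u_1+u_2}^2\sim\|u_1+u_2\|_h^2,
\]
using \eqref{lem1a} and \eqref{normeq}. The principal obstacle is the reference-patch inequality: while the zero boundary condition on the opposite edge gives the correct Poincar\'e-type structure, obtaining a constant $C$ that stays bounded as $\hat\Gamma$ approaches that edge and $\hat\Omega_l\cap\hat\omega$ degenerates into a thin strip requires care. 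Here the fact that the slopes of the piecewise-linear $\hat\Gamma$ are controlled by shape regularity of $\T_h$ provides the needed uniform control, after which the rest of the argument is routine.
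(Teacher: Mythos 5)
Your proposal is correct, and it diverges from the paper's proof in both parts, in instructive ways.

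For \eqref{lem1a} the paper does not pass through the reference patches at all: it works directly in physical coordinates, writing $\Gamma$ locally as a graph $\eta=\psi(\xi)$ with $|\psi(\xi)|\leq ch$ (the support of $u_l$ being confined to the one layer of cut elements, on whose far boundary $u_l$ vanishes), and obtains $h^{-1}\|u_l\|_{L^2(\Gamma)}^2\lesssim |u_l|_{1,\Omega_l}^2$ by the same fundamental-theorem-of-calculus plus Cauchy--Schwarz argument you use, with the width bound $|\psi|\leq ch$ supplying the factor that cancels the $h^{-1}$ weight. Your reference-patch version is equivalent in $d=2$ (your scaling claims are right: the $H^1$-seminorm squared is scale-invariant in 2D and $h^{-1}\|\cdot\|_{L^2(\Gamma_k)}^2$ transforms to the unweighted trace norm), and the degeneracy you worry about is harmless --- the thinner the strip below $\hat\Gamma$, the \emph{better} the Poincar\'e constant, since $|\hat u(\xi,\hat\psi(\xi))|^2\leq\hat\psi(\xi)\int_0^{\hat\psi(\xi)}|\partial_y\hat u|^2\,dy$. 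The one real cost of your route is generality: the patch construction is the single ingredient the paper flags as specifically two-dimensional, whereas the graph argument (and hence the paper's Lemma) carries over to $d=3$ unchanged; tying \eqref{lem1a} to the patches restricts it to $d=2$ unnecessarily.

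For \eqref{lemm1b} your argument is genuinely different and shorter. The paper proves a strengthened Cauchy--Schwarz inequality $|(u_1,u_2)_\ast|\leq C^\ast\Enorm{u_1}_{*}\Enorm{u_2}_{*}$ with $C^\ast<1$, using \eqref{traceres} to absorb the cross jump term $\lambda(\spacejump{u_1},\spacejump{u_2})_{\frac12,h,\Gamma}$ (which, unlike the gradient cross term, does \emph{not} vanish). You instead bypass the cross term entirely via the chain
\[
\|u_1\|_h^2+\|u_2\|_h^2\;\sim\;|u_1|_{1,\Omega_1}^2+|u_2|_{1,\Omega_2}^2\;=\;|u_1+u_2|_{1,\BrokenOmega}^2\;\leq\;|u_1+u_2|_{1,\BrokenOmega}^2+\lambda\|\spacejump{u_1+u_2}\|_{\frac12,h,\Gamma}^2\;\sim\;\|u_1+u_2\|_h^2,
\]
which is valid: the first equivalence is exactly \eqref{lem1a}, the middle identity is the disjoint-support splitting, and the last step is \eqref{normeq}. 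Since \eqref{lemm1b} only asserts the one-sided bound, nothing is lost; the paper's strengthened Cauchy--Schwarz buys the full two-sided equivalence of $\enorm{u_1+u_2}^2$ with $\enorm{u_1}^2+\enorm{u_2}^2$ and makes the constant's dependence on $\lambda$ explicit, but for the stated lemma your derivation suffices.
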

  \begin{proof}
    Take $l=1$. We have 
    \begin{equation} \label{pl1} \|u_1\|_h^2\sim \enorm{u_1}^2= |u_1|_{1,\Omega_1}^2 + \lambda \|\spacejump{u_1}\|_{\frac12,h,\Gamma}^2 \sim|u_1|_{1,\Omega_1}^2 + h^{-1} \|u_1\|_{L^2(\Gamma)}^2.
    \end{equation}
    This implies $|u_1|_{1,\Omega_1} \lesssim \|u_1\|_h$. 
    Next we show
    \begin{equation} \label{traceres}  h^{-1} \|u_1\|_{L^2(\Gamma)}^2 \lesssim |u_1|_{1,\Omega_1}^2.
    \end{equation}
    For this, we represent $\Gamma$ locally as the graph of a function $\psi$, with a local coordinate system $(\xi,\eta)$ as in Figure \ref{graph}.
    \begin{figure}[ht]
  \begin{center}
    \begin{tikzpicture}[scale=1.0,rotate=0]
  \begin{scope}[scale=1.0,xshift=0cm,yshift=0cm,rotate=0]
    
    \coordinate (etab) at (1,-0.25);
    \coordinate (etat) at (1,3);

    \coordinate (xil) at (0.75,0);
    \coordinate (xir) at (10,0);

    \coordinate (x2) at (2,0);
    \coordinate (x3) at (4,0);
    \coordinate (xc) at (5,0);
    \coordinate (x4) at (6,0);
    \coordinate (x5) at (8,0);
    \coordinate (x55) at (9,0);

    \coordinate (g1) at (0,1.5);
    \coordinate (g2) at (2,1.8);
    \coordinate (g3) at (4,2.3);
    \coordinate (g4) at (6,2.2);
    \coordinate (g5) at (8,1.9);
    \coordinate (g55) at (9,2.1);
    \coordinate (g6) at (10,2.5);

    \draw [->] (xil) -- (xir) node[below]{$\xi$};
    \draw [->] (etab) -- (etat) node[left]{$\eta$};
    
    \draw [<->] (g55) -- (x55);

    \coordinate (gx55) at ( $ (g55)!0.5!(x55) $ );
    \node[right] at  (gx55) {$\psi(\xi) \leq c h$};

    \draw [-] 
    (g1) to[out=0,in=180] 
    (g2) to[out=0,in=180] 
    (g3) to[out=0,in=180] 
    (g4) to[out=0,in=180] 
    (g5) to[out=0,in=180] 
    (g55) to[out=0,in=180] 
    (g6) node[right]{$\Gamma$};

    \draw [pattern=north east lines,opacity=0.15] 
    (g2) to[out=0,in=180] 
    (g3) to[out=0,in=180] 
    (g4) to[out=0,in=180] 
    (g5) --
    (x5) -- 
    (x2) -- cycle 
    ;


    \coordinate (A1) at (3.1,1);
    \coordinate (A2) at (4.7,1.2);
    \coordinate (A3) at (6.1,1.0);
    \coordinate (A4) at (7.5,0.9);
    \coordinate (A5) at (8,1.05);
    \coordinate (B0) at (2,2.7);
    \coordinate (B1) at (3.25,2.9);
    \coordinate (B2) at (5,2.8);
    \coordinate (B3) at (6.5,2.7);
    \coordinate (B4) at (8,2.9);

    \draw [-,draw opacity=0.55] (A1)
    -- (B0) 
    -- (B1)
    -- (B2)
    -- (B3)
    -- (B4)
    -- (A4)
    -- (A3)
    -- (A2)
    -- (A1)
    -- (B1)
    -- (A2)
    -- (B2)
    -- (A3)
    -- (B3)
    -- (A4);

    \coordinate (A3B2) at ( $ (A3)!0.5!(B2) $ );
    \node[right] at (A3B2) {${\rm supp}(u_1)$};

    \coordinate (A2xc) at ( $ (A2)!0.5!(xc) $ );
    \node[right] at (A2xc) {$u_1=0$};

  \end{scope}
\end{tikzpicture}
  \end{center}
  \vspace*{-0.35cm}
  \caption{Local representation of $\Gamma$ as a graph.}
  \label{graph}
\end{figure}
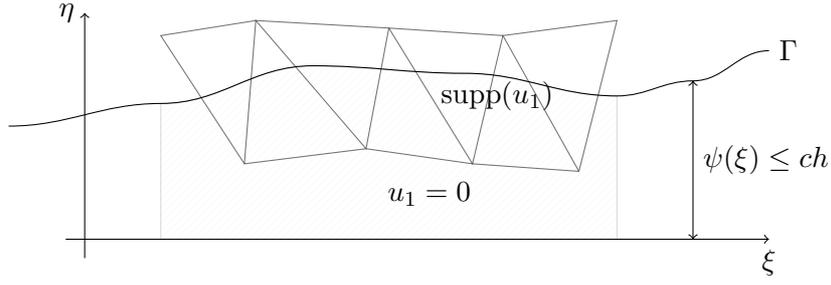
%
    Then we can write
    \begin{align*}
      u_1(\xi,\psi(\xi)) & = \underbrace{u_1(\xi,\psi(0))}_{=0} + \int_{0}^{\psi(\xi)} \frac{\partial u_1}{\partial \eta}(\xi,\eta) \, d\eta, \\
  \end{align*}
and thus
\begin{align*}
 u_1(\xi,\psi(\xi))^2 & = \Big| \int_{0}^{\psi(\xi)} \frac{\partial u_1}{\partial \eta}(\xi,\eta) \, d\eta \Big|^2 \leq \underbrace{|\psi(\xi)|}_{\leq c h} \int_{0}^{\psi(\xi)} (\frac{\partial u_1}{\partial \eta}(\xi,\eta))^2 \, d\eta.
    \end{align*}
    Integration over $\xi$ yields \eqref{traceres}.
    In combination with \eqref{pl1} this yields $\|u_1\|_h^2\lesssim |u_1|_{1,\Omega_1}$, which completes the proof of \eqref{lem1a}. We now consider the result in  \eqref{lemm1b}. Due to $ \| \cdot\|_h \sim \enorm{\cdot}$ is suffices to prove 
    \begin{equation} \label{pl2}
      \enorm{u_1}^2 +\enorm{u_2}^2  \lesssim \enorm{u_1+u_2}^2 \quad \text{for all}~ u_1+u_2 \in \W_1 \oplus \W_2.
    \end{equation}
    The scalar product corresponding to $\enorm{\cdot}$ is denoted by $(\cdot,\cdot)_\ast$, i.e. $(u,v)_\ast= (u,v)_{1, \BrokenOmega} +\lambda(\spacejump{u}, \spacejump{v})_{\frac12,h,\Gamma}$. From $ (u_1,u_2)_{1, \BrokenOmega}=0$ it follows that
    \[
    |(u_1,u_2)_\ast| = | \lambda(\spacejump{u}, \spacejump{v})_{\frac12,h,\Gamma}| \leq \lambda h^{-1} \|u_1\|_{L^2(\Gamma)} \|u_2\|_{L^2(\Gamma)}.
    \]
    Using the results in \eqref{traceres}, \eqref{lem1a} we get, with a suitable constant $c$ and for arbitrary $\delta \in (0,1)$:
    \begin{align*}
      |(u_1,u_2)_\ast| & \leq (1-\delta)\lambda h^{-1} \|u_1 \|_{L^2(\Gamma)} \|u_2\|_{L^2(\Gamma)} + \delta c \lambda |u_1 |_{1,\Omega_1}|u_2|_{1,\Omega_2} \\
      & \leq \max \{1-\delta,\delta c \lambda \} \enorm{u_1} \enorm{u_2}.
    \end{align*}
    By choosing a suitable $\delta$, we obtain the strengthened Cauchy-Schwarz inequality
    \[
    |(u_1,u_2)_\ast| \leq C^\ast\enorm{u_1} \enorm{u_2} \quad \text{for all}~u_1 \in \W_1, \, u_2 \in \W_2,
    \]
    with a constant $ C^\ast < 1$, independent of $h$ and of how the triangulation is intersected by $\Gamma$. This result is equivalent to the one in \eqref{pl2}.
  \end{proof}

  As a direct consequence of the stable splitting properties derived above we obtain the following main result.
  \begin{theorem} \label{thmmain1} Take $d=2$.
    There exists a constant $K$, independent of $h$ and of how the triangulation is intersected by $\Gamma$, such that
    \[  \|u_0\|_h^2 +  \|u_1\|_h^2 + \|u_2\|_h^2 \leq K \|u_0+u_1+u_2\|_h^2 \quad \text{for all}~~u=u_0+u_1+u_2 \in \s.
    \]
  \end{theorem}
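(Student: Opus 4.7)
The plan is to obtain Theorem~\ref{thmmain1} as a direct two-step combination of the results already established: Theorem~\ref{thmMain}, which gives stability of the coarse splitting $\s = \W_0 \oplus (\W_1 \oplus \W_2)$, and Lemma~\ref{lemma1}, equation~\eqref{lemm1b}, which gives stability of the finer splitting $\W_1 \oplus \W_2 = \W_1 \oplus \W_2$. The entire argument is a chained application of these two inequalities, so no new technical work is needed.

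Concretely, given a decomposition $u = u_0 + u_1 + u_2$ with $u_0 \in \W_0$ and $u_l \in \W_l$ for $l=1,2$, I first set $w := u_1 + u_2 \in \W_1 \oplus \W_2$. Then Theorem~\ref{thmMain} applied to the pair $(u_0, w)$ yields
\[
\|u_0\|_h^2 + \|w\|_h^2 \;\lesssim\; \|u_0 + w\|_h^2 \;=\; \|u_0 + u_1 + u_2\|_h^2.
\]
Next, I apply \eqref{lemm1b} from Lemma~\ref{lemma1} to the decomposition $w = u_1 + u_2$, obtaining
\[
\|u_1\|_h^2 + \|u_2\|_h^2 \;\lesssim\; \|w\|_h^2.
\]
Adding $\|u_0\|_h^2$ to both sides of this second estimate and combining with the first inequality gives
\[
\|u_0\|_h^2 + \|u_1\|_h^2 + \|u_2\|_h^2 \;\lesssim\; \|u_0\|_h^2 + \|w\|_h^2 \;\lesssim\; \|u_0 + u_1 + u_2\|_h^2,
\]
which is precisely the claimed estimate with a constant $K$ that is the product of the two implicit constants. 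Both of those constants are, by the statements of Theorem~\ref{thmMain} and Lemma~\ref{lemma1}, independent of $h$ and of how $\Gamma$ intersects the triangulation, so $K$ inherits this uniformity.

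There is no genuine obstacle at this stage: the delicate work, namely the strengthened Cauchy--Schwarz inequality on the reference patches underlying Theorem~\ref{thmMain} and the trace/Cauchy--Schwarz argument in Lemma~\ref{lemma1}, has already been carried out. The only point worth stressing in the write-up is that the decomposition $u = u_0 + u_1 + u_2$ is \emph{unique} by virtue of the direct sum in \eqref{splitting}, so the estimate controls all three components simultaneously from the norm of the sum, which is what the theory of subspace correction methods in \cite{Xu:92a,Yserentant:93} requires as input.
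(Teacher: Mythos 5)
Your proof is correct and is exactly the paper's argument: the paper's own proof of Theorem~\ref{thmmain1} is the one-line instruction to combine \eqref{stable} with \eqref{lemm1b}, and you have simply written out that chaining explicitly with $w = u_1 + u_2$. No further comment is needed.
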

  \begin{proof}
    Combine the result in \eqref{stable} with the one in \eqref{lemm1b}.
  \end{proof}
  \section{An optimal preconditioner based on approximate subspace corrections} \label{sectprecond}
  We describe and analyze an additive subspace decomposition preconditioner using the framework given in \cite{Yserentant:93}. 
  For this we first introduce some additional notation. Let $Q_l:\s \to \W_l$, $l=0,1,2$,  be the $L^2$-projection, i.e., for $u \in \s$:
  \[
  (Q_l u, w_l)_0= (u,w_l)_0 \quad \text{for all}~w_l \in \W_l.
  \]
  The bilinear form $a_h(\cdot,\cdot)$ on $\s$ that defines the discretization can be represented by the operator $A:\, \s \to \s$:
  \begin{equation} \label{defA}
  (Au,v)_0 =a_h(u,v) \quad \text{for all}~u,v \in \s.
  \end{equation}
  The discrete problem \eqref{discpoissonA} has the compact representation $A u=f_Q$, where $f_Q$ is the $L^2$-projection of the given data $f \in L^2(\Omega)$ onto the finite element space $\s$.  The Ritz approximations $A_l: \W_l \to \W_l$, $l=0,1,2$, of $A$ are given by
  \[
  (A_l u,v)_0=(Au,v)=a_h(u,v) \quad \text{for all}~ u,v \in \W_l.
  \]
  Note that these are symmetric positive definite operators.
  In the preconditioner we need symmetric positive definite approximations $B_l : \W_l \to \W_l$ of the Ritz operators $A_l$. The spectral equivalence of $B_l$ and $A_l$ is described by the following:
  \begin{equation} \label{spectral}
    \gamma_l ( B_l u,u)_0 \leq (A_l u,u)_0 \leq \rho_l( B_l u,u)_0  \quad \text{for all}~u \in \W_l,
  \end{equation}
  with strictly positive constants $\gamma_l$, $\rho_l$, $l=0,1,2$. The \emph{additive subspace preconditioner} is defined by
  \begin{equation} \label{additive}
    C=\sum_{l=0}^2 B_l^{-1} Q_l.
  \end{equation}
  For the implementation of this preconditioner one has to solve (in parallel) three linear systems. The operator $Q_l$ is not (explicitly) needed in the implementation, since if for a given $z \in \s$ one has to determine $d_l =B_l^{-1} Q_l z$, the solution can be obtained as follows: determine $d_l \in \W_l$ such that
  \[
  (B_l d_l, v)_0=(z,v)_0  \quad \text{for all}~v \in\W_l.
  \]
  The theory presented in \cite{Yserentant:93} can be used to quantify the quality of the preconditioner $C$.
  \begin{theorem} \label{MMain}
    Define $\gamma_{\min}= \min_l \gamma_l$, $\rho_{\max} = \max_l \rho_l$. Let $K$ be the constant of the stable splitting in Theorem~\ref{thmmain1}. The spectrum $\sigma (CA)$ is real and
    \[
    \sigma(CA) \subset \big[ \frac{\gamma_{\min}}{K }, 3 \rho_{\max} \big]
    \]
    holds.
  \end{theorem}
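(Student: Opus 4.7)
The plan is to apply the classical additive Schwarz framework of \cite{Yserentant:93}, with the uniform stable splitting of Theorem~\ref{thmmain1} supplying the only nontrivial constant. Since $A$ and $C$ are both symmetric positive definite with respect to the $L^2$-scalar product $(\cdot,\cdot)_0$, the product $CA$ is self-adjoint with respect to $a_h(\cdot,\cdot)$, its spectrum is real, and its extremal eigenvalues admit the Rayleigh-quotient representation
\begin{equation*}
\lambda_{\min}(CA)=\inf_{0\neq u\in\s}\frac{a_h(u,u)}{(C^{-1}u,u)_0},\qquad \lambda_{\max}(CA)=\sup_{0\neq u\in\s}\frac{a_h(u,u)}{(C^{-1}u,u)_0}.
\end{equation*}
It therefore suffices to sandwich $(C^{-1}u,u)_0$ between constant multiples of $a_h(u,u)=\|u\|_h^2$. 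For this I use the standard additive Schwarz identity
\begin{equation*}
(C^{-1}u,u)_0=\min_{\substack{u=u_0+u_1+u_2\\ u_l\in\W_l}}\sum_{l=0}^{2}(B_l u_l,u_l)_0,
\end{equation*}
which follows from the definition $C=\sum_l B_l^{-1}Q_l$ (see \cite{Yserentant:93}).

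For the lower bound on $\lambda_{\min}(CA)$ I insert into this identity the unique decomposition $u=u_0+u_1+u_2$ coming from the direct sum \eqref{splitting}. The spectral equivalence \eqref{spectral} together with $(A_l u_l,u_l)_0=a_h(u_l,u_l)=\|u_l\|_h^2$ gives $(B_l u_l,u_l)_0\le\gamma_l^{-1}\|u_l\|_h^2\le\gamma_{\min}^{-1}\|u_l\|_h^2$, and Theorem~\ref{thmmain1} bounds $\sum_l\|u_l\|_h^2\le K\,a_h(u,u)$. Hence $(C^{-1}u,u)_0\le (K/\gamma_{\min})\,a_h(u,u)$, which yields $\lambda_{\min}(CA)\ge\gamma_{\min}/K$. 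This is where the entire analysis is concentrated: without the $h$- and cut-independent constant $K$ from Theorem~\ref{thmmain1}, no uniform lower bound is possible, so this step is really the main obstacle -- but it has already been overcome.

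For the upper bound I take an \emph{arbitrary} decomposition $u=u_0+u_1+u_2$ and use \eqref{spectral} in the opposite direction, $(B_l u_l,u_l)_0\ge\rho_l^{-1}\|u_l\|_h^2\ge\rho_{\max}^{-1}\|u_l\|_h^2$. The triangle inequality followed by the discrete Cauchy--Schwarz inequality for three summands gives $\|u\|_h\le\sum_l\|u_l\|_h\le\sqrt{3}\bigl(\sum_l\|u_l\|_h^2\bigr)^{1/2}$, so $\|u\|_h^2\le 3\sum_l\|u_l\|_h^2$. Combining these bounds yields $\sum_l(B_l u_l,u_l)_0\ge (3\rho_{\max})^{-1}a_h(u,u)$, and taking the infimum over all admissible decompositions gives $(C^{-1}u,u)_0\ge(3\rho_{\max})^{-1}a_h(u,u)$, i.e.\ $\lambda_{\max}(CA)\le 3\rho_{\max}$. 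The factor $3$ here is simply the number of subspaces in the splitting and is the only place where it enters. Once Theorem~\ref{thmmain1} is available, the remainder of the proof is routine abstract ASM bookkeeping, so I expect no further obstacles.
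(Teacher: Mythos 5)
Your proof is correct and follows essentially the same route as the paper: the paper simply cites the abstract additive Schwarz result (Theorem 8.1 of \cite{Yserentant:93}) as a black box, whereas you unfold it via the identity $(C^{-1}u,u)_0=\min\sum_l(B_lu_l,u_l)_0$ and the Rayleigh quotient, but the two substantive estimates — the lower bound from Theorem~\ref{thmmain1} combined with \eqref{spectral}, and the upper bound $\|u\|_h^2\le 3\sum_l\|u_l\|_h^2$ — are identical, with the same constants $K/\gamma_{\min}$ and $3\rho_{\max}$.
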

  \begin{proof}
    We recall a main result from \cite{Yserentant:93} (Theorem 8.1). If there are strictly positive constants $K_1, K_2$ such that
    \[
    K_1^{-1} \sum_{l=0}^2 (B_l u_l,u_l) \leq \|u_0+u_1+u_2\|_h^2 \leq K_2 \sum_{l=0}^2 (B_l u_l,u_l)\quad \text{for all}~u_l \in \W_l
    \]
    is satisfied, then $\sigma(CA) \subset [K_1^{-1},K_2]$ holds.
    For the lower inequality we use Theorem~\ref{thmmain1} and \eqref{spectral}, which then results in
    \[
    \|u_0+u_1+u_2\|_h^2 \geq K^{-1} \sum_{l=0}^2\|u_l\|_h^2 = K^{-1} \sum_{l=0}^2 (A_l u_l,u_l)_0 \geq \frac{\gamma_{\min}}{K}  \sum_{l=0}^2 (B_l u_l,u_l)_0.
    \]
    For the upper bound we note
    \[
    \|u_0+u_1+u_2\|_h^2  \leq 3 \sum_{l=0}^2\|u_l\|_h^2 = 3 \sum_{l=0}^2 (A_l u_l,u_l)_0 \leq 3 \rho_{\max}\sum_{l=0}^2 (B_l u_l,u_l)_0. 
    \]
    Now we apply the above-mentioned result with $K_1=K/\gamma_{\min}$ and $K_2=3 \rho_{\max}$.
  \end{proof}
  The result in Theorem~\ref{thmmain1} yields that the constant $K$ is independent of $h$ and of how the triangulation intersects the interface $\Gamma$. It remains to choose appropriate operators $B_l$ such that $\gamma_{\rm min}$ and $\rho_{\max}$ are uniform constants, too.

  We first consider the approximation $B_0$ of the Ritz-projection $A_0$. Note that the finite element functions in $\W_0=V_h$ are continuous across $\Gamma$. This implies that
  \[
  (A_0 u,v)=a_h(u,v)=(\alpha u,v)_{1, \BrokenOmega}= (\alpha \nabla u, \nabla v)_0 \quad \text{for all}~~u,v \in \W_0.
  \]
  Hence, $A_0$ is a standard finite element discretization of a Poisson equation (with a discontinuous diffusion coefficient $\alpha$). As a preconditioner $B_0$ for $A_0$ we can use a standard symmetric multigrid method (which is a multiplicative subspace correction  method). From the literature \cite{HackbuschMGBook,Xu:92a,Yserentant:93}  we know that for this choice of $B_0$ we have spectral inequalities as in \eqref{spectral}, with $\rho_0=1$ and a constant $\gamma_0 >0$ that is independent of $h$ and of how $\Gamma$ intersects the triangulation. 

  It remains to find an appropriate preconditioner $B_l$ of $A_l$, $l=1,2$. For this we propose the simple Jacobi method, i.e., diagonal scaling as a preconditioner for $A_l$, $l=1,2$. We first introduce the operator $B_l$ that represents the Jacobi preconditioner. Recall that $\W_l= {\rm span}\{ \phi_j^\Gamma~|~ j \in \J_{\Gamma,l} \} $.  Elements $u,v \in \W_l$ have unique representations  $u= \sum_{j \in  \J_{\Gamma,l}} \alpha_j \phi_j^\Gamma$, $v=\sum_{j \in  \J_{\Gamma,l}} \beta_j \phi_j^\Gamma$.  In terms of these representations the Jacobi preconditioner is defined by
  \begin{equation} \label{Jacobi}
    (B_l u,v)_0= \sum_{j \in  \J_{\Gamma,l}} \alpha_j \beta_j a_h( \phi_j^\Gamma,\phi_j^\Gamma), \quad u,v \in \W_l, ~ l=1,2.
  \end{equation}
  Note that $ a_h( \phi_j^\Gamma,\phi_j^\Gamma)$ are diagonal entries of the stiffness matrix corresponding to $a_h(\cdot,\cdot)$. The result in the next lemma shows that this diagonal scaling  yields a robust preconditioner for the Ritz operator $A_l$. 
  \begin{lemma} \label{lemspectralJ}
For the Jacobi preconditioner $B_l$ there are strictly positive constants $\gamma_l$, $\rho_l$, independent of $h$ and of how the triangulation is intersected by $\Gamma$ such that 
    \begin{equation} \label{spectralJ}
      \gamma_l ( B_l u,u)_0 \leq (A_l u,u)_0 \leq \rho_l( B_l u,u)_0  \quad \text{for all}~u \in \W_l, ~l=1,2,
    \end{equation}
    holds.
  \end{lemma}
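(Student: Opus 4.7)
The plan is to use Lemma~\ref{lemma1} to collapse both sides of \eqref{spectralJ} onto the $H^1$-seminorm over $\Omega_l$. Applied to $u \in \W_l$ and to each individual basis function, that lemma together with the norm equivalence \eqref{normeq} gives
\[
(A_l u,u)_0 = a_h(u,u) \sim |u|_{1,\Omega_l}^2, \qquad a_h(\phi_j^\Gamma,\phi_j^\Gamma) \sim |\phi_j^\Gamma|_{1,\Omega_l}^2,
\]
and therefore, writing $u = \sum_{j \in \J_{\Gamma,l}} \alpha_j \phi_j^\Gamma$, the quantity $(B_l u,u)_0 = \sum_j \alpha_j^2\, a_h(\phi_j^\Gamma,\phi_j^\Gamma)$ is equivalent to $\sum_j \alpha_j^2 |\phi_j^\Gamma|_{1,\Omega_l}^2$. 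Thus \eqref{spectralJ} reduces to the purely local equivalence
\[
|u|_{1,\Omega_l}^2 \sim \sum_{j \in \J_{\Gamma,l}} \alpha_j^2\, |\phi_j^\Gamma|_{1,\Omega_l}^2
\]
with constants depending only on shape regularity.

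To prove this equivalence I would argue element by element on $T \in \mT_h^\Gamma$. For every $j \in \J_{\Gamma,l}$ whose associated vertex $\bx_j$ lies in $T$, one has $\bx_j \in \Omega_{l'}$ with $l'\neq l$, so $\Phi_j \equiv 1$ on $T \cap \Omega_l$ and $\phi_j^\Gamma|_{T \cap \Omega_l} = \phi_j|_{T \cap \Omega_l}$. Since $\nabla \phi_j$ is constant on $T$,
\[
|u|_{1,T \cap \Omega_l}^2 = |T \cap \Omega_l|\, \Big|\sum_{j}\alpha_j \nabla \phi_j\Big|^2, \qquad |\phi_j^\Gamma|_{1,T \cap \Omega_l}^2 = |T \cap \Omega_l|\,|\nabla \phi_j|^2,
\]
where the sums extend over the at most two vertices of $T$ lying in $\Omega_{l'}$. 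The potentially tiny factor $|T \cap \Omega_l|$, which is the usual source of the ill-conditioning, appears on both sides and cancels. What remains is an equivalence between $\big|\sum_j \alpha_j \nabla \phi_j\big|^2$ and $\sum_j \alpha_j^2 |\nabla \phi_j|^2$ for at most two summands on a single shape-regular triangle, which holds because any two of the three nodal gradients are linearly independent with an angle uniformly bounded away from $0$ and $\pi$.

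Summing the element-wise equivalence over $T \in \mT_h^\Gamma$ and interchanging the order of summation yields the desired global equivalence, using that each $\phi_j^\Gamma$ is supported on a uniformly bounded number of elements. The step that looks delicate, and where the interface position could a priori enter, is exactly this cancellation of $|T \cap \Omega_l|$ in the element-wise step: the argument works precisely because the diagonal entries of the stiffness matrix used in \eqref{Jacobi} carry the same cut factor as the corresponding basis function, so no negative power of $|T \cap \Omega_l|$ ever appears. Once that observation is in hand, the rest of the proof is a routine application of shape regularity and finite overlap of the xfem basis supports.
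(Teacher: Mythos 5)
Your proposal is correct and follows essentially the same route as the paper: reduce both $(A_l u,u)_0$ and the diagonal entries to $H^1$-seminorms over $\Omega_l$ via \eqref{lem1a}, localize to the cut pieces $T\cap\Omega_l$ where $\phi_j^\Gamma$ coincides with $\phi_j$ and all gradients are constant, observe that the factor $|T\cap\Omega_l|$ cancels, and invoke the uniform linear independence of the nodal gradients associated with the proper subset of vertices lying in the opposite subdomain. Your explicit identification of the cancellation of the cut measure as the crux is exactly the mechanism in the paper's argument (and, as in the paper, the reasoning extends verbatim to any proper subset of simplex vertices, not just the two-vertex case you mention).
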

  \begin{proof}
    Take $u= \sum_{j \in  \J_{\Gamma,l}} \alpha_j \phi_j^\Gamma \in  \W_l$. For each $T \in \T_h^\Gamma$ we define  $T_l= T \cap \Omega_l$, and for each $T_l$ we denote by $V(T_l)$ the set vertices of $T$ that are \emph{not} in $\Omega_l$. Note that $V(T_l) \neq \emptyset$ and $V(T_l)$ does not contain all vertices of $T$.  Using \eqref{lem1a} and the construction of the xfem basis functions we get
    \begin{equation} \label{fg} \begin{split}
        ( B_l u,u)_0 & =\sum_{j \in  \J_{\Gamma,l}} \alpha_j^2 a_h( \phi_j^\Gamma,\phi_j^\Gamma) \sim  \sum_{j \in  \J_{\Gamma,l}} \alpha_j^2 |\phi_j^\Gamma|_{1,\Omega_l}^2 \\
        & = \sum_{T \in \T_h^\Gamma} \sum_{j \in V(T_l)} \alpha_j^2 |\phi_j^\Gamma|_{1,T_l}^2 \sim \sum_{T \in \T_h^\Gamma} \sum_{j \in V(T_l)} \alpha_j^2 \Vert \nabla (\phi_j)_{|T} \Vert_2^2 |T_l|.
      \end{split} \end{equation}
    Using \eqref{lem1a} and the fact that $\nabla u$ is a constant vector on each $T_l$ we get, with $\|\cdot\|_2$ the Euclidean vector norm,
    \begin{equation} \label{fg1}
      (A_l u,u)_0= \|u\|_h^2 \sim |u|_{1,\Omega_l}^2 = \sum_{T \in \T_h^\Gamma} \|\nabla u\|_{L^2(T_l)}^2 =  \sum_{T \in \T_h^\Gamma} |T_l| \|(\nabla u)_{|T_l}\|_2^2.
    \end{equation}
    Now note that $(\nabla u)_{|T_l} = \sum_{j  \in V(T_l)} \alpha_j (\nabla \phi_j^\Gamma)_{|T_l}= \sum_{j \in V(T_l)} \alpha_j \nabla (\phi_j)_{|T}$. Because $V(T_l)$ does not contain all vertices of $T$, the vectors in  the set
    $\{ (\nabla \phi_j)_{|T}~|~j\in V(T_l)\}$ are independent and the angles between the vectors depend only on the geometry of the triangulation $\T_h$. This implies that
    \[
    \|(\nabla u)_{|T_l}\|_2^2 \, \sim \sum_{j \in V(T_l)}\alpha_j^2 \|\nabla (\phi_j)_{|T}\|_2^2.
    \]
    Combining this with the results in \eqref{fg} and \eqref{fg1} completes the proof.
  \end{proof}

  \begin{remark}\label{diagprec} \rm
    Instead of an optimal multigrid preconditioner in the subspace $\W_0= V_h$, one can also use a simpler (suboptimal) Jacobi preconditioner, i.e. $B_0$ analogous to \eqref{Jacobi}.  For this choice the spectral constants in \eqref{spectral} are $\gamma_0\sim h^2$ and $\rho_0 \sim 1$. The three subspaces are disjoint and thus if one applies a Jacobi preconditioner in the three  subspaces, the additive subspace preconditioner $C$ in \eqref{additive} coincides with a Jacobi preconditioner for the operator $A$.  From  Theorem \ref{MMain} we can conclude  that $\kappa(CA) \leq c h^{-2}$ holds, with a constant $c$ independent on $h$ and the cut position. Similar uniform $\mathcal{O} (h^{-2})$ condition number bounds have recently been 
derived in the literature, cf. \cite{zawakrbe13} and \cite{burmanzunino12}. 
In these papers, however, for obtaining such a bound an additional 
stabilization term is added to the bilinear form $a_h(\cdot,\cdot)$.
Our analysis shows that although the condition number of the stiffness matrix corresponding to  $a_h(\cdot,\cdot)$ does not have a uniform (w.r.t. the interface cut) bound $ch^{-2}$, a simple diagonal scaling results in a matrix with a spectral condition number that is bounded by $ ch^{-2}$, with a constant $c$ that is independent of how $\Gamma$ is intersected by the triangulation. We note that adding a stabilization as treated \cite{burmanzunino12}  may have a positive effect not only on the condition number, but also on robustness of the discretization w.r.t. large jumps in the diffusion coefficient.  
  \end{remark}
  \begin{remark}\rm
    The assumption $d=2$ is essential only in the proof of Theorem \ref{thmMain}. Concerning a generalization to $d=3$ we note the following. Firstly, it is not obvious how   the subdivision into patches  $\omega_k$ can be  generalized to three space dimensions. Secondly, if $d=2$ then for every element within the reference patch $\hat{\omega}$ we know that the local finite element space on $T \cap \Omega_i$ is one-dimensional which is exploited to characterize the one-sided limit at the interface. 
In three dimensions the local finite element space can be two-dimensional on both parts $T \cap \Omega_i,~i=1,2$ such that it is not obvious how to generalize the proof of Theorem \ref{thmMain}.

Nevertheless, we expect that the result of Theorem~\ref{thmmain1}, hence also the results  on the additive subspace preconditioner,  hold in three space dimensions. This claim is supported by the results of a numerical example with $d=3$, presented in section \ref{example3d}.
  \end{remark}
  \begin{remark} \rm
    For ease of presentation, all dependencies on $\alpha$, especially on the jumps in $\alpha$, have been absorbed in the constants that appear in the estimates.  The results in neither Lemma \ref{lemma1}, Theorem \ref{thmMain} nor Lemma \ref{lemspectralJ} are robust with respect to jumps in $\alpha$. We illustrate the dependence of the quality of the subspace preconditioner on the jumps in $\alpha$ in  a numerical example in section \ref{example2d}. 
  \end{remark}
 \begin{remark} \rm
  Instead of the \emph{additive} preconditioner $C$ in \eqref{additive}, one can also use a \emph{multiplicative} version, cf.~\cite{Yserentant:93}. The optimality of this multiplicative variant, which can be used as a solver or a preconditioner, can easily be derived using the framework given in \cite{Yserentant:93} and the results presented above. 
\end{remark}
  \section{Numerical experiments} \label{sectExp}
In this section results for different subspace correction preconditioners are presented. We consider a discrete interface  problem
of the form: determine $u_h \in V_h^\Gamma$ such that 
\[
   a_h(u_h,v_h) = (f,v_h)_0 \quad \text{for all}~~v_h \in V_h^\Gamma,
\]
 with $a_h(\cdot, \cdot)$ as in\eqref{defah}. We take test problems with $d=2$ and $d=3$. The resulting stiffness matrix, which is the matrix representation of the operator $A$ in \eqref{defA},  is denoted by $\bA$. 
 The matrices corresponding to the Ritz approximations $A_0$ (projection on $V_h$) and 
$A_\X$ (projection on $V_h^\X$) are denoted by $\MAT{A}_0$ and $\MAT{A}_\X$, respectively.
The diagonal matrices ${\rm diag}(\bA)$, ${\rm diag}(\bA_0)$ ${\rm diag}(\bA_x)$ are denoted by $\MAT{D}_{\MAT{A}}$, $\MAT{D}_0$  and $\MAT{D}_x$, respectively. Furthermore,  $\MAT{C}_0$ denotes a preconditioner for $\MAT{A}_0$, for instance a multigrid preconditioner or $ \MAT{C}_0=\MAT{D}_0$. 
  We define the block preconditioners
  \begin{equation} \label{preco}
    \MAT{B}_{\MAT{A}} := \left( \begin{array}{cc} \MAT{A}_0 & \MAT{0} \\ \MAT{0} & \MAT{A}_\X  \end{array} \right), \quad
    \MAT{B}_{\MAT{D}} := \left( \begin{array}{cc} \MAT{A}_0 & \MAT{0} \\ \MAT{0} & \MAT{D}_\X  \end{array} \right), \quad
    \MAT{B}_{\MAT{C}} := \left( \begin{array}{cc} \MAT{C}_0 & \MAT{0} \\ \MAT{0} & \MAT{D}_\X \end{array} \right).
  \end{equation}
The matrix $\MAT{B}_{\MAT{A}}$ corresponds to an additive subspace preconditioner with exact subspace corrections, $\MAT{B}_{\MAT{D}}$ has an exact correction in $V_h$ and an approximate diagonal subspace correction in $V_h^x$, and  $\MAT{B}_{\MAT{C}}$ has approximate subspace corrections in all subspaces.

In the following we study the performance of these preconditioners, in particular their robustness w.r.t. both the variation in the mesh size $h$ and the location of the interface. We also ilustrate the dependence of the condition numbers on $\lambda$ and the diffusivity ratio $\alpha_1/\alpha_2$.  In section \ref{example2d}  we  consider a two-dimensional example with a challenging configuration in the sense that many elements in the mesh have small cuts. This setting allows for a detailed study of the dependencies on $h$, $\alpha_1/\alpha_2$ and $\lambda$. 
In the second example in section \ref{example3d} we consider a three-dimensional analog and apply a multigrid preconditioner $\MAT{C}_0 $ for $ \MAT{A}_0 $. 

  \subsection{Two-dimensional test case} \label{example2d}
  The domain is the unit square $\Omega=[0,1]^2$ with an interface $\Gamma$ which is a square with corners that are rounded off. A sketch  is displayed in Figure~\ref{fignumex1} (left). The rounded square is centered around $\bx_0$, it is denoted as $\Omega_1$. We set the dimensions to $l=0.2$ and $r=0.05$. In the implementation a piecewise linear approximation of $\Gamma$ is used. To investigate conditioning of the system, we consider a situation with many small cuts. 
  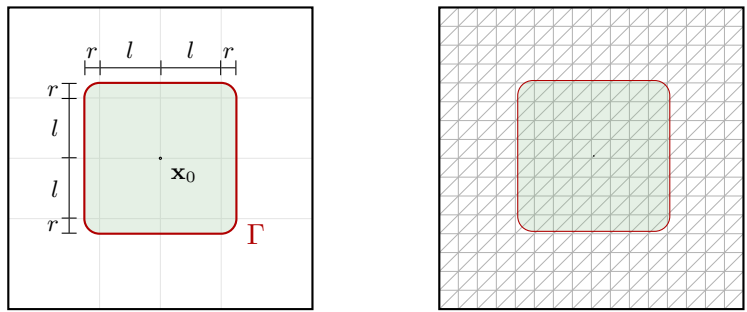
\begin{figure}[ht]
  \begin{center}
    \hspace*{1cm}
    \begin{minipage}{0.3\textwidth}
      \begin{tikzpicture}[scale=4.0]
  \begin{scope}
    \def\interfacecolorstring{mildred}

    \def\cx{0.5}
    \def\cy{0.5}
    \def\d{0.2}
    \def\r{0.05}

    \coordinate (tl) at (0,1);
    \coordinate (tr) at (1,1);
    \coordinate (bl) at (0,0);
    \coordinate (br) at (1,0);

    \coordinate (a1) at (\cx-\d-\r,   \cy+\d);
    \coordinate (a2) at (\cx-\d-\r,\cy+\d+\r);
    \coordinate (a3) at (   \cx-\d,\cy+\d+\r);

    \coordinate (ab) at (\cx      ,\cy+\d+\r);
    \coordinate (abt) at (\cx      ,\cy+\d+2*\r);
    \coordinate (abtt) at (\cx      ,1.0);
    \coordinate (aabt) at (\cx-0.5*\d,\cy+\d+2*\r);
    \coordinate (abbt) at (\cx+0.5*\d,\cy+\d+2*\r);

    \coordinate (b1) at (\cx+\d,   \cy+\d+\r);
    \coordinate (b2) at (\cx+\d+\r,\cy+\d+\r);
    \coordinate (b3) at (\cx+\d+\r,\cy+\d   );

    \coordinate (c1) at (\cx+\d+\r,\cy-\d);
    \coordinate (c2) at (\cx+\d+\r,\cy-\d-\r);
    \coordinate (c3) at (\cx+\d   ,\cy-\d-\r);

    \coordinate (d1) at (\cx-\d   ,\cy-\d-\r);
    \coordinate (d2) at (\cx-\d-\r,\cy-\d-\r);
    \coordinate (d3) at (\cx-\d-\r,\cy-\d   );

    \coordinate (ddal) at (\cx-\d-2*\r,\cy-0.5*\d );
    \coordinate (daal) at (\cx-\d-2*\r,\cy+0.5*\d );
    \coordinate (dal) at (\cx-\d-2*\r,\cy );

    \coordinate (a1l) at (\cx-\d-2*\r,\cy+\d     );
    \coordinate (a2l) at (\cx-\d-2*\r,\cy+\d+  \r);
    \coordinate (a2t) at (\cx-\d-  \r,\cy+\d+2*\r);
    \coordinate (a3t) at (\cx-\d     ,\cy+\d+2*\r);

    \coordinate (a1ll) at (0.0,\cy+\d     );

    \coordinate (a3tt) at (\cx-\d     ,1.0);

    \coordinate (a1la2l) at (\cx-\d-2*\r,\cy+\d+0.5*\r);
    \coordinate (a2ta3t) at (\cx-\d-0.5*\r,\cy+\d+2*\r);

    \coordinate (b1t) at (\cx+\d     ,\cy+\d+2*\r);
    \coordinate (b2t) at (\cx+\d+  \r,\cy+\d+2*\r);
    \coordinate (b2r) at (\cx+\d+2*\r,\cy+\d+  \r);
    \coordinate (b3r) at (\cx+\d+2*\r,\cy+\d     );

    \coordinate (b3rr) at (1.0,\cy+\d     );

    \coordinate (b1tt) at (\cx+\d     ,1.0);

    \coordinate (b1tb2t) at (\cx+\d+0.5*\r,\cy+\d+2*\r);

    \coordinate (c1r) at (\cx+\d+2*\r,\cy-\d     );
    \coordinate (c2r) at (\cx+\d+2*\r,\cy-\d-  \r);
    \coordinate (c2b) at (\cx+\d+  \r,\cy-\d-2*\r);
    \coordinate (c3b) at (\cx+\d     ,\cy-\d-2*\r);

    \coordinate (bcrr) at (1.0,\cy     );

    \coordinate (c1rr) at (1.0,\cy-\d     );
    \coordinate (c3bb) at (\cx+\d     ,0.0);

    \coordinate (d1b) at (\cx-\d     ,\cy-\d-2*\r);
    \coordinate (d2b) at (\cx-\d-  \r,\cy-\d-2*\r);
    \coordinate (d2l) at (\cx-\d-2*\r,\cy-\d-  \r);
    \coordinate (d3l) at (\cx-\d-2*\r,\cy-\d     );

    \coordinate (d2ld3l) at (\cx-\d-2*\r,\cy-\d-0.5*\r);
    \coordinate (d3ll) at (0.0,\cy-\d     );

    \coordinate (dall) at (0.0,\cy );

    \coordinate (d1bb) at (\cx-\d     ,0);

    \coordinate (cdbb) at (\cx ,0.0);

    \draw[black!10] (a3tt) to (d1bb);
    \draw[black!10] (b1tt) to (c3bb);

    \draw[black!10] (abtt) to (cdbb);

    \draw[black!10] (d3ll) to (c1rr);
    \draw[black!10] (a1ll) to (b3rr);

    \draw[black!10] (dall) to (bcrr);

    \draw [thick] (tl) -- (tr) -- (br) -- (bl) -- cycle;

    \draw[thick,\interfacecolorstring,fill=mildgreen!40,opacity=0.25, draw opacity=1] (a1) to[out=90,in=180] (a3)
    to (b1)
    to[out=0,in=90] (b3)
    to (c1)
    to[out=-90,in=0] (c3)
    to (d1)
    to[out=180,in=-90] (d3)
    to (a1);

    \draw[black,|-] (a2t) to (a3t);
    \draw[black,|-] (a3t) to (abt);
    \draw[black,|-|] (abt) to (b1t);
    \draw[black, -|] (b1t) to (b2t);

    \draw[black, |-] (d2l) to (d3l);
    \draw[black,|-]  (d3l) to (dal);
    \draw[black,|-|] (dal) to (a1l);
    \draw[black, -|] (a1l) to (a2l);

    \draw (c2) node[right] {\color{mildred} $\Gamma$};

    \draw (aabt) node[above] {\footnotesize$l$};
    \draw (abbt) node[above] {\footnotesize$l$};

    \draw (a2ta3t) node[above] {\footnotesize$r$};
    \draw (b1tb2t) node[above] {\footnotesize$r$};

    \draw (ddal) node[left] {\footnotesize$l$};
    \draw (daal) node[left] {\footnotesize$l$};

    \draw (a1la2l) node[left] {\footnotesize$r$};
    \draw (d2ld3l) node[left] {\footnotesize$r$};

    \draw (0.5,0.5) circle(0.1pt) node[below right] {\footnotesize $\bx_0$};
  \end{scope}
\end{tikzpicture}

    \end{minipage}
    \hspace*{1cm}
    \begin{minipage}{0.4\textwidth}
      \newcommand\myscalestr{4.0}
\begin{tikzpicture}
  [scale=1.0,
  spy using outlines={black, magnification=4, size=66,
    connect spies, transform shape}
  ]
  \begin{scope}
    \def\interfacecolorstring{mildred}

    \def\cx{0.5078125*\myscalestr} 
    \def\cy{0.5078125*\myscalestr}

    \def\d{0.2*\myscalestr}
    \def\r{0.05*\myscalestr}

    \coordinate (tl) at (0,1*\myscalestr);
    \coordinate (tr) at (1*\myscalestr,1*\myscalestr);
    \coordinate (bl) at (0,0);
    \coordinate (br) at (1*\myscalestr,0);

    \coordinate (a1) at (\cx-\d-\r,   \cy+\d);
    \coordinate (a2) at (\cx-\d-\r,\cy+\d+\r);
    \coordinate (a3) at (   \cx-\d,\cy+\d+\r);

    \coordinate (ab) at (\cx      ,\cy+\d+\r);
    \coordinate (abt) at (\cx      ,\cy+\d+2*\r);
    \coordinate (abtt) at (\cx      ,1.0);
    \coordinate (aabt) at (\cx-0.5*\d,\cy+\d+2*\r);
    \coordinate (abbt) at (\cx+0.5*\d,\cy+\d+2*\r);

    \coordinate (b1) at (\cx+\d,   \cy+\d+\r);
    \coordinate (b2) at (\cx+\d+\r,\cy+\d+\r);
    \coordinate (b3) at (\cx+\d+\r,\cy+\d   );

    \coordinate (c1) at (\cx+\d+\r,\cy-\d);
    \coordinate (c2) at (\cx+\d+\r,\cy-\d-\r);
    \coordinate (c3) at (\cx+\d   ,\cy-\d-\r);

    \coordinate (d1) at (\cx-\d   ,\cy-\d-\r);
    \coordinate (d2) at (\cx-\d-\r,\cy-\d-\r);
    \coordinate (d3) at (\cx-\d-\r,\cy-\d   );

    \coordinate (ddal) at (\cx-\d-2*\r,\cy-0.5*\d );
    \coordinate (daal) at (\cx-\d-2*\r,\cy+0.5*\d );
    \coordinate (dal) at (\cx-\d-2*\r,\cy );

    \coordinate (a1l) at (\cx-\d-2*\r,\cy+\d     );
    \coordinate (a2l) at (\cx-\d-2*\r,\cy+\d+  \r);
    \coordinate (a2t) at (\cx-\d-  \r,\cy+\d+2*\r);
    \coordinate (a3t) at (\cx-\d     ,\cy+\d+2*\r);

    \coordinate (a1ll) at (0.0,\cy+\d     );

    \coordinate (a3tt) at (\cx-\d     ,1.0);

    \coordinate (a1la2l) at (\cx-\d-2*\r,\cy+\d+0.5*\r);
    \coordinate (a2ta3t) at (\cx-\d-0.5*\r,\cy+\d+2*\r);

    \coordinate (b1t) at (\cx+\d     ,\cy+\d+2*\r);
    \coordinate (b2t) at (\cx+\d+  \r,\cy+\d+2*\r);
    \coordinate (b2r) at (\cx+\d+2*\r,\cy+\d+  \r);
    \coordinate (b3r) at (\cx+\d+2*\r,\cy+\d     );

    \coordinate (b3rr) at (1.0,\cy+\d     );

    \coordinate (b1tt) at (\cx+\d     ,1.0*\myscalestr);

    \coordinate (b1tb2t) at (\cx+\d+0.5*\r,\cy+\d+2*\r);

    \coordinate (c1r) at (\cx+\d+2*\r,\cy-\d     );
    \coordinate (c2r) at (\cx+\d+2*\r,\cy-\d-  \r);
    \coordinate (c2b) at (\cx+\d+  \r,\cy-\d-2*\r);
    \coordinate (c3b) at (\cx+\d     ,\cy-\d-2*\r);

    \coordinate (bcrr) at (1.0*\myscalestr,\cy     );

    \coordinate (c1rr) at (1.0*\myscalestr,\cy-\d     );
    \coordinate (c3bb) at (\cx+\d     ,0.0);

    \coordinate (d1b) at (\cx-\d     ,\cy-\d-2*\r);
    \coordinate (d2b) at (\cx-\d-  \r,\cy-\d-2*\r);
    \coordinate (d2l) at (\cx-\d-2*\r,\cy-\d-  \r);
    \coordinate (d3l) at (\cx-\d-2*\r,\cy-\d     );

    \coordinate (d2ld3l) at (\cx-\d-2*\r,\cy-\d-0.5*\r);
    \coordinate (d3ll) at (0.0,\cy-\d     );

    \coordinate (dall) at (0.0,\cy );

    \coordinate (d1bb) at (\cx-\d     ,0);

    \coordinate (cdbb) at (\cx ,0.0);





    \def\dw{0.0625*\myscalestr}
    \foreach \i in {0,...,16} {
      \draw [very thin,black!30] (\i*\dw,0.0) -- (\i*\dw,1.0*\myscalestr); 
    }

    \foreach \j in {0,...,16} {
      \draw [very thin,black!30] (0.0,\j*\dw) -- (1.0*\myscalestr,\j*\dw); 
    }

    \foreach \i in {0,...,15} {
      \foreach \j in {0,...,15} {
        \draw [very thin,black!30] (\i*\dw,\j*\dw) -- (\i*\dw+\dw,\j*\dw+\dw); 
      }
    }

    \draw [thick] (tl) -- (tr) -- (br) -- (bl) -- cycle;

    \draw[\interfacecolorstring,fill=mildgreen!40,opacity=0.25, draw opacity=0] (a1) to[out=90,in=180] (a3)
    to (b1)
    to[out=0,in=90] (b3)
    to (c1)
    to[out=-90,in=0] (c3)
    to (d1)
    to[out=180,in=-90] (d3)
    to (a1);

    \draw[\interfacecolorstring,draw opacity=1] (a1) to[out=90,in=180] (a3)
    to (b1)
    to[out=0,in=90] (b3)
    to (c1)
    to[out=-90,in=0] (c3)
    to (d1)
    to[out=180,in=-90] (d3)
    to (a1);










    \draw (\cx,\cy) circle(0.025*\myscalestr pt) node[below right] {}; 



  \end{scope}
\end{tikzpicture}

    \end{minipage}
  \end{center}
  \caption{Setup of example in section \ref{example2d} (left) and the uniform mesh on level L2 with an interface that generates many small cuts (right).}
  \label{fignumex1}
\end{figure}
%

  To this end we use a uniform triangulation of $\Omega$ and set $\bx_0 = (0.5,0.5) + \varepsilon (1,1)$ with a ``shift parameter'' $\varepsilon = 2^{-20}$. In this configuration almost all cut elements $T \in \mT_h^\Gamma$ have very small cuts (cf. right sketch in Figure \ref{fignumex1}). A similar test case has been considered in \cite{burmanhansbo12} as ``sliver cut case''. We use four levels of uniform refinement  denoted by L1,..,L4. 

  The diffusion parameters are fixed to $(\alpha_1, \alpha_2) = (1.5,2)$. Note that we consider $\beta_1 = \beta_2 = 1$, but the problem is equivalent to every combination of Henry and diffusion parameters which fulfill $(\alpha_1/\beta_1, \alpha_2/\beta_2)= (1.5,2)$. The Nitsche stabilization parameter is set to $\lambda=4 \bar\alpha$ with $\bar\alpha=\frac12 (\alpha_1+\alpha_2)=1.75$. As a right-hand side source term we choose $f=1$ in $\Omega_1$ and $f=0$ in $\Omega_2$. 

In the tables below we present results for the spectral condition number of the preconditioned matrix. We also include the iteration number of the CG method, applied to the preconditioned system,  needed to reduce the starting residual by a factor of $10^{ 6}$.

  \begin{table}[h]
  \begin{center}
      \begin{tabular}{c@{}cc@{}rc@{}rc@{}rc@{}r}
\toprule
&& \multicolumn{2}{c}{L1} & \multicolumn{2}{c}{L2} & \multicolumn{2}{c}{L3} & \multicolumn{2}{c}{L4} \\
\midrule
$\kappa (\MAT{B}_{\MAT{A}}^{-1} \MAT{A})$ &  (its.)
& $4.98 \times 10^0$ & (13) 
& $4.95 \times 10^0$ & (13) 
& $4.82 \times 10^0$ & (12) 
& $4.82 \times 10^0$ & (11)  \\
$\kappa (\MAT{B}_{\MAT{D}}^{-1} \MAT{A})$ &  (its.)
& $5.12 \times 10^0$ & (13) 
& $5.06 \times 10^0$ & (13) 
& $4.94 \times 10^0$ & (12) 
& $4.94 \times 10^0$ & (11)  \\
$\kappa (\MAT{D}_{\MAT{A}}^{-1} \MAT{A})$ &  (its.)
& $2.78 \times 10^1$ & (22) 
& $1.11 \times 10^2$ & (40) 
& $4.42 \times 10^2$ & (73) 
& $1.77 \times 10^3$ & (127) 
\\
\bottomrule
\end{tabular}


  \end{center}
  \caption[Block-precondition]{Condition number and iteration counts of CG method ($\lambda=4 \bar\alpha$, $\alpha_1/\alpha_2=0.75$).}
  \label{tabblockpre}
\end{table}
%

In Table \ref{tabblockpre} the condition numbers corresponding to the block preconditioners $\MAT{B}_{\MAT{A}}$, $\MAT{B}_{\MAT{D}}$ and $\MAT{D}_{\MAT{A}}$ are displayed for four different levels of refinement. The condition number of $\MAT{A}$ is above $10^7$ and the number of CG iterations without preconditioning is above $2000$ on all four levels. We observe that the condition numbers of $\MAT{B}_{\MAT{A}}$ and $ \MAT{B}_{\MAT{D}}$ are essentially independent on the mesh size $h$. 
From further experiments we observe that the condition number of $\MAT{A}$ severely depends on the shift parameter, the results for the block preconditioners however remain essentially the same.  This is in agreement with the results derived in section~\ref{sectprecond}. Also the Jacobi preconditioner $\MAT{D}_{\MAT{A}}$ behaves as expected. With decreasing mesh size $h$, for  the condition number we observe $\kappa(\MAT{D}_{\MAT{A}}^{-1} \MAT{A}) \sim h^{-2}$. 

  \begin{table}[h]
  \begin{center}
      \begin{tabular}{c@{}cc@{}rc@{}rc@{}rc@{}r}
\toprule
\multicolumn{2}{c}{$\lambda/\bar\alpha$} &\multicolumn{2}{c}{$4 \times 10^0$} & \multicolumn{2}{c}{$4 \times 10^1$} & \multicolumn{2}{c}{$4 \times 10^2$} & \multicolumn{2}{c}{$4 \times 10^3$} \\
\midrule
$\kappa (\MAT{B}_{\MAT{A}}^{-1} \MAT{A})$ &  (its.)
& $4.95 \times 10^0$ & (13) 
& $2.50 \times 10^0$ & (9) 
& $2.29 \times 10^0$ & (7) 
& $2.27 \times 10^0$ & (6)  \\
$\kappa (\MAT{B}_{\MAT{D}}^{-1} \MAT{A})$ &  (its.)
& $5.06 \times 10^0$ & (13) 
& $2.14 \times 10^1$ & (13) 
& $2.07 \times 10^2$ & (14) 
& $2.07 \times 10^3$ & (15)  \\
$\kappa (\MAT{D}_{\MAT{A}}^{-1} \MAT{A})$ &  (its.)
& $1.11 \times 10^2$ & (40) 
& $9.49 \times 10^1$ & (36) 
& $2.07 \times 10^2$ & (38) 
& $2.07 \times 10^3$ & (44) 
\\
\bottomrule
\end{tabular}


  \end{center}
  \caption{Condition number and iteration counts of CG method (level L2, $\alpha_1/\alpha_2=0.75$).}
  \label{tablambda}
\end{table}
%

For these preconditioners, with  a fixed mesh (level L2) the dependence on $\lambda$ is shown in Table~\ref{tablambda}.  The results suggest that the estimate in Theorem \ref{thmMain} is essentially independent on $\lambda$. The condition number $\kappa( \MAT{B}_{\MAT{A}}^{-1} \MAT{A})$ even slightly decreases for increasing $\lambda$. 
The diagonal preconditioning of the xfem block $\MAT{A}_x$, however, results in a linear dependence on $\lambda$. Hence, diagonal preconditioning of $\MAT{A}_x$ is not robust w.r.t. $\lambda$. 
Despite the increasing condition number, the CG iteration counts seem to stay almost constant. A similar behavior can be observed for the Jacobi preconditioner $\MAT{D}_{\MAT{A}}$. 

  \begin{table}[h]
  \begin{center}
      \begin{tabular}{c@{}cc@{}rc@{}rc@{}rc@{}r}
\toprule
\multicolumn{2}{c}{$\alpha_1/\alpha_2$} & \multicolumn{2}{c}{$7.5 \times 10^{-1}$} & \multicolumn{2}{c}{$7.5 \times 10^0$} & \multicolumn{2}{c}{$7.5 \times 10^1$} & \multicolumn{2}{c}{$7.5 \times 10^2$} \\
\midrule
$\kappa (\MAT{B}_{\MAT{A}}^{-1} \MAT{A})$ &  (its.)
& $4.95 \times 10^0$ & (13) 
& $1.13 \times 10^1$ & (20) 
& $5.54 \times 10^1$ & (26) 
& $5.21 \times 10^2$ & (28)  \\
$\kappa (\MAT{B}_{\MAT{D}}^{-1} \MAT{A})$ &  (its.)
& $5.06 \times 10^0$ & (13) 
& $1.29 \times 10^1$ & (20) 
& $9.87 \times 10^1$ & (28) 
& $9.61 \times 10^2$ & (26)  \\
$\kappa (\MAT{D}_{\MAT{A}}^{-1} \MAT{A})$ &  (its.)
& $1.11 \times 10^2$ & (40) 
& $6.33 \times 10^2$ & (45) 
& $5.90 \times 10^3$ & (50) 
& $5.86 \times 10^4$ & (72) 
\\
\bottomrule
\end{tabular}


  \end{center}
  \caption{Condition number and iteration counts of CG method (level L2, $\lambda=4\bar\alpha$).}
  \label{tabalpha}
\end{table}
%

In Table \ref{tabalpha} we illustrate the behavior of the preconditioners for increasing diffusivity ratios. We observe that for all three preconditioners the corresponding condition number has a roughly linear dependence on $\alpha_1/\alpha_2$.  We conclude that the stability estimate in Theorem~\ref{thmMain} is not robust with respect to  variation in $\alpha_1/\alpha_2$. The increase of the CG iteration counts, however,  is only very mild. 

  \subsection{Three-dimensional test case} \label{example3d}
We consider a setup in three dimensions very similar to the one used in section \ref{example2d}. The domain is
 the unit cube $\Omega=[0,1]^3$ with a cube that is rounded off as the dividing interface. The cube, denoted as $\Omega_1$, is centered around $\bx_0 = (0.5,0.5,0.5) + \varepsilon (1,1,1)$ with a small ``shift parameter'' $\varepsilon = 2^{-20}$. The dimensions of the cube are chosen as in section \ref{example2d} ($l=0.2$, $r=0.05$) and a uniform triangulation of $\Omega$ is used. We use seven levels of uniform refinement denoted by L0,..,L6 where the coarsest level (L0) is a $2\!\times\!2\!\times\!2$-grid. 

  The diffusion parameters are fixed to $(\alpha_1, \alpha_2) = (1,3)$. Note that we consider $\beta_1 = \beta_2 = 1$. The Nitsche stabilization parameter is set to $\lambda=5 \bar\alpha$ with $\bar\alpha=\frac12 (\alpha_1+\alpha_2)=2$. As a right-hand side source term we choose $f=1$ in $\Omega_1$ and $f=0$ in $\Omega_2$. 

We investigate the performance of the CG method preconditioned with $\MAT{B}_{\MAT{C}}$, cf.~\eqref{preco}. For the preconditioner $\MAT{C}_0$ of  $\MAT{A}_0$ we use  a standard multigrid method.  In this multigrid preconditioner we apply  one V-cycle with a damped Jacobi (damping factor $0.8$) iteration as pre- and post-smoother. 
In Table \ref{tabmgpre} the iteration counts that were needed to reduce the initial residual by a factor of $10^{6}$ for the levels L2 to L6 are shown. On level L6 we have approximately two million unknowns. 

  \begin{table}[h]
  \begin{center}
      \begin{tabular}{cccccc}
\toprule
& L2 & L3 & L4 & L5 & L6 \\
\midrule
CG iterations
 & 22 
 & 25 
 & 27 
 & 29  
 & 32  \\
\bottomrule
\end{tabular}


  \end{center}
  \caption[Multigrid-precondition]{Iteration counts of multigrid-preconditioned CG method ($\lambda=5 \bar\alpha$, $\alpha_2/\alpha_1=3$).}
  \label{tabmgpre}
\end{table}
%

We observe that the iteration counts stay essentially bounded such that the effort for solving the linear systems is $\mathcal{O}(N)$ with $N$ the number of degrees of freedom, i.e. $\MAT{B}_{\MAT{C}}$ is an optimal preconditioner. The mild increase in iteration numbers further decreases if the Jacobi preconditioner $\MAT{D}_x$ used in  the subspace $V_h^\X$ is replaced by a symmetric Gauss-Seidel preconditioner. For this choice we obtain the numbers 21,23,23,25,27 for the levels L2 to L6. 

\subsection*{Acknowledgement}
The authors gratefully acknowledge funding by the German Science Foundation (DFG) within the Priority Program (SPP) 1506 ``Transport Processes at Fluidic Interfaces''. 
  \bibliographystyle{siam}
  \bibliography{literatur}


\end{document}